\newcommand\numberthis{\addtocounter{equation}{1}\tag{\theequation}}
\DeclarePairedDelimiter\floor{\lfloor}{\rfloor}
\chardef\bslash=`\\ 
\newtheorem{thm}{Theorem}[section]
\newtheorem*{theorem_A}{Theorem A}
\newtheorem*{thm_B}{Theorem B}
\newtheorem{cor}[thm]{Corollary}
\newtheorem{lem}[thm]{Lemma}
\newtheorem{prop}[thm]{Proposition}
\newtheorem{example}{Example}
\newtheorem*{exmpl}{Example}
\newtheorem*{prop_spc}{Proposition 3.3}
\newtheorem*{prop_spc2}{Proposition 4.1}
\theoremstyle{definition}
\newtheorem{defn}{Definition}[section]
\newtheorem{rem}{Remark}[section]
\newtheorem{notation}{Notation}
\newtheorem*{observation}{Observation}
\theoremstyle{remark}
\DeclareMathOperator{\aut}{\mathrm{Aut}}
\DeclareMathOperator{\Hom}{Hom}
\DeclareMathOperator{\gl}{GL}
\DeclareMathOperator{\pgl}{PGL}
\DeclareMathOperator{\stab}{Stab}
\DeclareMathOperator{\rk}{\mathrm{rk}}
\DeclareMathOperator{\End}{End}
\DeclareMathOperator{\chc}{\mathrm{char}}
\DeclareMathOperator{\diag}{diag}
\DeclareMathOperator{\snf}{\mathcal{SNF}}
\DeclareMathOperator{\plu}{Pl\ddot{u}}
\newcommand{\extp}{\@ifnextchar^\@extp{\@extp^{\,}}}
\def\@extp^#1{\mathop{\bigwedge\nolimits^{\!#1}}}
\newlength{\hatchspread}
\newlength{\hatchthickness}
\newlength{\hatchshift}
\newcommand{\hatchcolor}{}
\tikzset{hatchspread/.code={\setlength{\hatchspread}{#1}},
         hatchthickness/.code={\setlength{\hatchthickness}{#1}},
         hatchshift/.code={\setlength{\hatchshift}{#1}},
         hatchcolor/.code={\renewcommand{\hatchcolor}{#1}}}
\tikzset{hatchspread=3pt,
         hatchthickness=0.4pt,
         hatchshift=0pt,
         hatchcolor=black}
\newcommand{\eval}[2][\right]{\relax
  \ifx#1\right\relax \left.\fi#2#1\rvert}
\let\abs=\envert
\begin{document}
\title[Quantum polynomials]{Automorphisms of quantum polynomials}
  
\author[A. Gupta]{Ashish Gupta}
\address{Department of Mathematics\\
Ramakrishna Mission Vivekananda Educational and Research Institute\\
Belur, \\
Howrah, WB 711202\\
India}
\email{a0gupt@gmail.com\thanks{}} 

\begin{abstract}
An important step in the determination of the automorphism group of the quantum torus of rank $n$ (or twisted group algebra of $\mathbb Z^n$) is the determination of its so-called non-scalar automorphisms.  
We present a new algorithimic approach towards this problem based on the bivector representation $\extp^2: \gl(n, Z) \rightarrow \gl(\binom{n}{2}, \mathbb Z)$ of $\gl(n, \mathbb Z)$ and thus compute the non-scalar automorphism group $\aut(\mathbb Z^n, \lambda)$ in several new cases. As an application of our ideas we show that the  quantum polynomial algebra (multiparameter quantum affine space of rank $n$) has only scalar (or toric) automorphisms provided that the torsion-free rank of the subgroup generated by the defining multiparameters is no less than $\binom{n - 1}{2} + 1$ thus improving an earlier result. We also investigate the question: when is a multiparameter quantum affine space free of so-called linear automorphisms other than those arising from the action of the $n$-torus ${(\mathbb F^\ast)}^n$.
\end{abstract}
\maketitle
\tableofcontents
\section{Introduction}\label{intro}

Quantum polynomials are non-commutative versions of polynomial and Laurent polynomial algebras. The difference with the ordinary polynomials lies in the fact that the commutativity of the variables  is replaced by quasi-commutativity, that is, $X_iX_j = q_{ij}X_jX_i$ for non-zero scalars $q_{ij}$. This relation is the Weyl form of the cannonical commutation relation of quantum mechanics.
For the Laurent case (where each variable $X_j$ has an inverse) there other terms in use, for example, quantum torus, twisted group algebra, McConnell--Pettit algebra etc.

Quantum polynomials play a key role in the theory of quantum groups~\cite{ART1997} and also in non-commutative geometry~\cite{Man}. Their Laurent versions arise in Lie theory as coordinate structures of extended affine Lie algebras~\cite{NeebKH2008} and also in the representation theory of nilpotent groups~\cite{Br2000}. The question of automorphisms of polynomial algebras is a still open one (the Jacobian conjecture) and it is therefore of interest to investigate the automorphisms of quantum polynomial algebras. 
It is generally believed that these algebras have relatively fewer automorphisms, that is, they are rigid. However, the complete understanding of their automorphisms groups is yet to be had and the results in this paper are expected to be a step in this direction.

We briefly recall the definitions. A quantum affine space $\mathcal O_{\mathfrak q}(\mathbb F^n)$ over a field $\mathbb F$ is defined as  

\begin{align}
\label{quantum_space_def}
\mathcal O_{\mathfrak q}(\mathbb F^n) &:= \mathbb F\langle X_1, X_2, \cdots, X_n \rangle/(X_i X_j - q_{ij}X_j X_i)
\end{align}

where $\mathfrak q$ stands for the matrix of \emph{multiparameters} $q_{ij}$, that is, 
$\mathfrak q  =  (q_{ij})$ and \[ q_{ij} \in \mathbb F^\ast := \mathbb F \smallsetminus \{0\}. \]   
The matrix $\mathfrak{q}$ is assumed to be multiplicatively anti-symmetric, that is, 
$q_{ii} = 1$ and $q_{ji} = q_{ij}^{-1}.$ 
A given quantum affine space $\mathcal O_{\mathfrak q}(\mathbb F^n)$ can be embedded in a \emph{quantum torus} $\mathcal O_{\mathfrak q}({(\mathbb F^\ast)}^n)$ by means of localization. The latter type of algebra is generated by the indeterminates $X_1, \cdots, X_n$ together with their inverses subject to the quantum commutation relations as in (\ref{quantum_space_def}).  The following notion plays an important  role in the theory of quantum polynomials.

\begin{defn}
For the quantum polynomial algebras $\mathcal O_{\mathfrak q}({\mathbb F}^n)$ and
$\mathcal O_{\mathfrak q}({(\mathbb F^\ast)}^n)$ the $\lambda$-group (denoted $\Lambda$) 
is the subgroup of $\mathbb F^\ast$ generated by the multiparameters $q_{ij}$. 
\end{defn}

For the quantum tori the monomials 
$\mathbf X^{\mathbf m} := X_1^{m_1}\cdots X_n^{m_n}$ are units and the group-theoretic commutator $[\mathbf X^{\mathbf m}, \mathbf X^ {\mathbf m'}]$ defined as 
\[ [\mathbf X^{\mathbf m}, \mathbf X^ {\mathbf m'}] := \mathbf X^{\mathbf m} \mathbf X^{\mathbf m'}{(\mathbf X^{\mathbf m})}^{-1}{(\mathbf X^{\mathbf m'})}^{-1} \] 
yields an alternating $\mathbb Z$-bilinear function  \begin{equation}
\label{lmbdadefn}    
\lambda: \Gamma  \times \Gamma  \rightarrow \mathbb F^\ast,  \ \ \ \ \ \ \ \ \ \   \lambda(\mathbf m, \mathbf m') = [\mathbf X^{\mathbf m},\mathbf X^{\mathbf m'}],  \ \ \ \ \ \ \ \ \ \ \forall  \mathbf m, \mathbf m' \in \Gamma := \mathbb Z^n.  
\end{equation} 
whose image is contained in the group $\Lambda$ (e.g., \cite[Section 1]{OP1995}).
Let   
  \begin{equation}\label{dcpms-Lmbda}
      \Lambda : =  \langle p_1 \rangle \times \langle p_2 \rangle \times \cdots \times \langle p_l \rangle, \ \ \ \ \ \ \ \ \  l \in \mathbb N. \end{equation}
It was observed in  \cite[Lemma 3.3(ii)]{OP1995} (see Section \ref{aut-qtor}) that in the study of the autmorphisms of a quantum torus the crucial case is where the group $\Lambda$ is torsion-free.
We may thus assume that each direct summand $\langle p_i \rangle\ (i = 1, \cdots, l)$ in \eqref{dcpms-Lmbda} is an infinite cycle.   
For \[ \mathbf m, \mathbf m' \in \Gamma = \mathbb Z^n \] we thus have 
\begin{equation}\label{e_i-forms}
     \lambda(\mathbf m, \mathbf m') = p_1^{e_1(\mathbf m, \mathbf m')}p_2^{e_2(\mathbf m, \mathbf m')}\cdots p_l^{e_l(\mathbf m, \mathbf m')}
\end{equation}
 where each exponent map $e_i: \Gamma \times \Gamma \rightarrow \mathbb Z$ yields an alternating bilinear form on $\Gamma$.
 
 It is known (e.g., \cite{MP}) that the units of a quantum torus algebra are trivial, that is, are of the form $\alpha \mathbf X^{\mathbf m}$, where $\alpha \in \mathbb F^\ast$. 
 Let us denote by $\mathscr A$ the group $\aut_{\mathbb F}(\mathcal O_{\mathfrak q}({(F^\ast)}^n))$ of all 
 $\mathbb F$-automorphisms and by $\mathscr U$ the group of trivial units of the algebra $\mathcal O_{\mathfrak q}({(F^\ast)}^n)$.  
It is easily seen (e.g., \cite{OP1995}) that the action of the group $\mathscr A$ on the quantum torus $\mathcal O_{\mathfrak q}({(F^\ast)}^n)$ induces an action of this same group on the group $\mathscr U$ of trivial units fixing $\mathbb F^\ast$ elementwise.  There is thus an action of the group $\mathscr A$ on the quotient group $\mathscr U/\mathbb F^\ast \cong \Gamma$ yielding a homomorphism 
\begin{equation}\label{actn-Gma} \mathscr A \longrightarrow \aut \Gamma = \gl(n, \mathbb Z) \end{equation} 
whose kernel is the group $\mathscr S$ 
of all \emph{ scalar automorphisms} defined by $\psi(\mathbf X^{\mathbf m}) = \phi(\mathbf m)(\mathbf X^{\mathbf m})$ for $\phi \in  \Hom(\Gamma, \mathbb F^\ast)$~\cite{OP1995}. Thus  $\mathscr S \cong (\mathbb F^\ast)^n$.
 Furthermore by \cite[Lemma 3.3(iii)]{OP1995} the image of the map in \eqref{actn-Gma} 
coincides with the group $\aut(\mathbb Z^n, \lambda) \le \gl(n, \mathbb Z)$ of  all \emph{non-scalar automorphisms} $\sigma$ of $\Gamma$ satisfying  
\begin{equation}
\label{form_prsvng}
\lambda (\sigma \mathbf {m}, \sigma \mathbf {m'}) = \lambda(\mathbf {m}, \mathbf {m'} ) \ \ \ \ \ \ \ \  \forall \bf{m}, \bf{m'} \in \mathbb Z^n.    
\end{equation}
We thus have the following exact sequence for the group $\mathscr A$ as noted in \cite{NeebKH2008}
\begin{equation}\label{Neeb-Key-Result1} 
1 \rightarrow \mathscr S \rightarrow \mathscr A  \rightarrow \aut(\mathbb Z^n, \lambda) \rightarrow 1. \end{equation}

In terms of alternating bilinear forms $e_i$ defined above the following characterization of the non-scalar automorphism group $\aut(\mathbb Z^n, \lambda)$ (as noted in \cite{OP1995}) is immediate.  
\begin{thm}[Theorem 3.4 of \cite{OP1995}] 
 \label{OP-charac}
 Let $\mathrm{Sp}(\mathbb Z, e_i)$ be the symplectic group associated with the form $e_i$. Then
 \begin{equation}\label{NsAut-OP-frmla}
 \aut(\mathbb Z^n, \lambda) = \bigcap_{i = 1}^l \mathrm{Sp}(\mathbb Z, e_i).
 \end{equation}
 \end{thm}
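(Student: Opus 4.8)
The plan is to reduce the single form-preservation condition \eqref{form_prsvng} that defines $\aut(\mathbb Z^n, \lambda)$ to a simultaneous family of conditions on the component forms $e_i$, exploiting that $\Lambda$ is free abelian on the generators $p_i$. First I would fix an arbitrary $\sigma \in \gl(n, \mathbb Z)$ and expand both sides of \eqref{form_prsvng} by means of the factorization \eqref{e_i-forms}: the left-hand side becomes $\prod_{i=1}^l p_i^{e_i(\sigma \mathbf m, \sigma \mathbf m')}$ and the right-hand side $\prod_{i=1}^l p_i^{e_i(\mathbf m, \mathbf m')}$, for each pair $\mathbf m, \mathbf m' \in \Gamma$.

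The decisive step is the uniqueness of these exponents. By the reduction recorded after \eqref{dcpms-Lmbda} (via \cite[Lemma 3.3(ii)]{OP1995}) we may assume $\Lambda$ is torsion-free, so the decomposition \eqref{dcpms-Lmbda} exhibits $\Lambda$ as a free abelian group with basis $p_1, \dots, p_l$. Consequently an identity $\prod_i p_i^{a_i} = \prod_i p_i^{b_i}$ in $\Lambda$ forces $a_i = b_i$ for every $i$. Applying this to the two expansions above, condition \eqref{form_prsvng} holds for a fixed pair $(\mathbf m, \mathbf m')$ if and only if $e_i(\sigma \mathbf m, \sigma \mathbf m') = e_i(\mathbf m, \mathbf m')$ for all $i = 1, \dots, l$.

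Quantifying over all $\mathbf m, \mathbf m' \in \Gamma$ then yields the equivalence: $\sigma$ preserves $\lambda$ if and only if $\sigma$ preserves each $e_i$. Since each $e_i$ is an alternating $\mathbb Z$-bilinear form and $\mathrm{Sp}(\mathbb Z, e_i)$ is by definition the stabilizer of $e_i$ inside $\gl(n, \mathbb Z)$, the latter condition reads precisely $\sigma \in \bigcap_{i=1}^l \mathrm{Sp}(\mathbb Z, e_i)$. This establishes both inclusions at once and gives \eqref{NsAut-OP-frmla}.

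I would expect the only genuine subtlety to be the uniqueness-of-exponents step, which rests entirely on the torsion-freeness of $\Lambda$: were $\Lambda$ to contain torsion, the exponents $e_i(\mathbf m, \mathbf m')$ would be well defined only modulo the orders of the $p_i$, the maps $e_i$ would take values in finite cyclic groups rather than in $\mathbb Z$, and the clean intersection of integral symplectic groups would break down. This is exactly why the reduction to the torsion-free case is carried out before the statement; once that reduction is in place, the remainder of the argument is a direct comparison of exponents and requires no further input.
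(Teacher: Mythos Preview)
Your proof is correct and is precisely the ``immediate'' argument the paper alludes to: the paper does not supply a detailed proof of this statement but simply cites \cite{OP1995} and notes that the characterization follows at once from the definition \eqref{form_prsvng} together with the decomposition \eqref{e_i-forms}. Your expansion of both sides via \eqref{e_i-forms} and comparison of exponents using the freeness of $\Lambda$ is exactly that unpacking, so there is nothing to add.
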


 Thus a non-scalar automorphism must preserve each of the forms $e_i$ and so must stabilize the radicals of each of these forms. This fact has been fruitfully used in \cite{OP1995} towards determining the non-scalar automorphism group $\aut(\mathbb Z^n, \lambda)$ in certain cases. 
However, notwithstanding the nicety of the preceding theorem from a computational viewpoint \eqref{NsAut-OP-frmla} seems to entail some difficulties limiting the cases where it can be directly applied.

In the following we present a new approach towards the computation of this group. 
Before stating it we fix some notation.

\begin{notation}\label{not1}
In view of $\eqref{dcpms-Lmbda}$ let  
\begin{equation} \label{rel-matrx}
    \lambda(\mathbf e_i, \mathbf e_j) = p_1^{ m_{1}^{(ij)}} \cdots  p_l^{ m_{l}^{(ij)} }, \ \ \ \ \ \ \ \  \ \ \ \  \forall  1 \le i < j \le n,  
\end{equation} 
where $\mathbf e_i, \mathbf e_j$ are standard basis vectors of the free $\mathbb Z$-module $\Gamma$. On the $\binom{n}{2}$ pairs $(ij),\ (i < j)$ we assume the lexicographic order. Let $\mathsf M \in \mathrm{Mat}_{\binom{n}{2} \times l}(\mathbb Z)$ be the 
matrix whose $((ij), s)$ entry is the exponent $m_{s}^{(ij)}$ of $p_s$ in \eqref{rel-matrx} ($\forall s \in \{1. \cdots, l\}$). 
\end{notation}

With this notation we have the following.
\begin{theorem_A}
For a quantum torus $\mathcal O_{\mathfrak q} (\mathbb (F^\ast)^n )$ suppose that the group $\Lambda$ generated by the multiparameters $q_{ij}$ is torsion-free. Set $N  =  \binom{n}{2}$ and let $\mathsf M$ be as defined in Notation \ref{not1} above.  Then 
\[  \aut (\mathbb Z^n, \lambda) = \bigl ( \stab_{\gl(n, \mathbb Z)}(\mathsf M) \bigr )^t  \] 
where $t$ denotes transposition and $\stab_{\gl(n, \mathbb Z)}(\mathsf M)$ the stabilizer of $\mathsf M$ in $\gl(n, \mathbb Z)$ with respect to the bivector representation 
\begin{equation*} \displaystyle\extp^2 : \gl(n, \mathbb Z) \rightarrow \gl(N, \mathbb Z), \ \ \ \ \  \  A \rightarrow \wedge^2 (A) 
\end{equation*}  
of $\gl(n, \mathbb Z)$ and the  action of $\gl(N,\mathbb Z)$ on the space $\mathrm{Mat}_{N, l}(\mathbb Z)$ by left multiplication. In other words
\[  \aut (\mathbb Z^n, \lambda) = \{ A^t \mid A \in \gl(n,\mathbb Z)\ \mathrm{and} \  (\wedge^2 A)\mathsf M = \mathsf M   \}. \] 
\end{theorem_A}


\begin{rem}
Although the matrix $\mathsf{M}$  depends on the choice of a $\mathbb Z$-basis in the group $\Lambda$ the group $\stab_{\gl(N, \mathbb Z)}(\mathsf M)$ (and consequently $\stab_{\gl(n, \mathbb Z)}(\mathsf M)$) is independent of such a choice. Indeed for a change of basis matrix $P \in \gl(l, \mathbb Z)$ we have $\mathsf M' = \mathsf MP$ and then $X \mathsf M' = \mathsf M'$ if and only if $X\mathsf M = \mathsf M$ for $X \in \gl(N, \mathbb Z)$.
\end{rem}
Theorem A allows us to determine the non-scalar automorphism group $\aut(\mathbb Z^n, \lambda)$ thus:

 \emph{Step 1}: We first determine the group $\mathscr N: =   \stab_{\gl(N, \mathbb Z)}(\mathsf M)$ (where $N = \binom{n}{2}$).  This is easily done if $\mathsf M$ is brought to the Smith normal form. 

\emph{Step 2}: Next we find the subgroup $\mathscr N_1 : = \mathscr N \cap \extp^2(\gl(n, \mathbb Z))$. This can be done by noting as in \cite{Nem} that the projective transformations of the projective space $\mathrm{P}(\wedge^2 {\bar{\mathbb Q}}^n)$ (here $\bar {\mathbb Q}$ stands for the algebraic closure of $\mathbb Q$) arising from the matrices in $\extp^2(\gl(n, \mathbb Z))$ preserve the projective grassmannian variety $\mathrm{Gr}(2,n)$. More algebraically, the group $\extp^2(\gl(n, \mathbb Z))$ is identified in \cite{VP07} with the connected component of the subgroup of
$\gl(N, \mathbb Z )$ preserving the ideal $\plu$ generated by all the Pl\"{u}cker polynomials 
defining the variety $\mathrm{Gr}(2,n)$. 

\emph{Step 3}:
Once $\mathscr N_1$ has thus been determined we must calculate $N = \bigl (  \extp^2 \bigr )^{-1}(\mathscr N_1)$. This amounts to calculating the exterior root of a matrix in $\mathscr N_1$ which is determined uniquely up to minus sign~\cite{LoLu} and for which a polynomial-time algorithm (implemented in GAP) exists~\cite{Grnhl}.      
 
\begin{rem} 
The algorithm mentioned in Step 3 can also determine (in polynomial time) if a matrix $A \in \gl(N, \mathbb Z)$ has an exterior root and consequently Step 2 may not be necessary. However it may still be helpful in the determination of the non-scalar automorphism group.
\end{rem}  

Until now the nonscalar automorphism group has been calculated (\cite{OP1995, NeebKH2008}) in the case $n \ge 3$ or in the case when the multiparameters are (multiplicatively) independent.
Examples of the above approach applied to the calculation of the non-scalar automorphism group of quantum tori for $n = 4$ are given in Section \ref{aut-qtor}. One such example is as follows.
\begin{exmpl}
 For $n = 4$ suppose that the group $\Lambda$ is freely generated by the commutators $q_{14}$, $q_{23}$, $q_{24}$ and $q_{34}$, while 
\[ q_{12} = q_{13} = 1.\]
Then 
\[ \aut(\mathbb Z^4, \lambda) = \Biggl \{ f_{b, \epsilon} : = \begin{pmatrix} \epsilon & 0 & 0 & -b \\
0 & \epsilon & 0 & 0 \\
0 & 0 & \epsilon & 0 \\
0 & 0 & 0 & \epsilon \end{pmatrix}, \ \  b \in \mathbb Z, \epsilon \in \{ \pm 1 \} \Biggr \}.\]
\end{exmpl}
The following proposition is also established in Section \ref{aut-qtor}.
\begin{prop_spc} Let $n \ge 5$.
For a quantum torus $\mathcal O_{\mathfrak q} (\mathbb (F^\ast)^n )$ such that \[ q_{12} = q_{13} = \cdots =  q_{1(n-2)} = 1 \]  and the remaining multiparameters  independent in $\mathbb F^\ast$ we have
\[ \aut(\mathbb Z^n, \lambda) \cong \mathbb Z_2.\] 
\end{prop_spc}
We hope that the non-scalar automorphism group can similarly be determined in many new and interesting situations using the approach described above. 
Using Theorem A we can also easily recover the group $\aut(\mathbb Z^n, \lambda)$ in the limited number of known cases~\cite{KPS94, OP1995, NeebKH2008} dealing with small values of $n$ and the case where the group $\Lambda$ has the maximal possible rank. 

We next consider the automorphism groups of quantum affine spaces. 
Until now in the study of automorphisms of quantum affine spaces the following important cases have been dealt with: (i) the uniparameter case, that is, $q_{ij} = q$ and  the group $\langle q \rangle$ is infinite cyclic~\cite{AC1992} and (ii) the case in which the multiparameters are in general position~\cite{ART1997}, that is, the group $\Lambda$ has maximum possible (torsion-free) rank. The main conclusions here are that under these assumptions the automorphism group shrinks to the image of the action of the torus ${(\mathbb F^\ast)}^n$. We show the same conclusion remains true in somewhat more general situations:
\begin{thm_B}\label{thmB}
 A quantum affine space  $\mathcal O_{\mathfrak q}(\mathbb F^n)$ such that the subgroup $\Lambda$ has rank no smaller than $\binom{n - 1}{2} + 1$ satisfies 
 \[ \aut(\mathcal O_{\mathfrak q}(\mathbb F^n)) = (\mathbb F^\ast)^n. \]  
\end{thm_B}    

An automorphism of the algebra $\mathcal O_{\mathfrak q}$ is called linear if it stabilizes the subspace spanned by $X_1, \cdots, X_n$. The following proposition gives a criterion for a multiparameter quantum affine space to be free of linear automorphisms other than those arising from the action of the torus ${(\mathbb F^\ast)}^n$.

\begin{prop_spc2}
Suppose that $\chc(\mathbb F) \ne 2$ and $n \ge 3$. Let $\mathfrak q = (q_{ij})$ be a multiplicatively antisymmetric matrix whose entries satisfy
\begin{align*}
 &\mbox{(i)}\  q_{ij} \ne q_{kl} \ \ \ \ \ \ \ \ \forall i < j , \ \ \forall k <  l, \\
 &\mbox{(ii)} \ q_{ij}q_{kl} \ne 1 \ \ \ \ \ \ \ \ \ \forall i < j , \ \ \forall k <  l, \ (i,j) \ne (k,l).
\end{align*}
 The group of linear automorphisms of the quantum affine space $\mathcal O_{\mathfrak q}(\mathbb F^n)$ coincides with the torus ${(\mathbb F^\ast)}^n$.
\end{prop_spc2}

\begin{notation}   We will use the short forms $\mathcal {O}_{\mathfrak q}$ for the quantum affine space  and $\widehat {\mathcal {O}}_{\mathfrak q}$ for quantum torus obtained from it by localizing at the multiplicative subset generated by the indeterminates $X_i$ ($1 \le i \le n$) and refer to it as the ``corresponding quantum torus".  
Automorphisms of the algebras we consider will always be $\mathbb
 F$-automorphisms. 
\end{notation}

\section{Automorphisms of quantum tori}\label{aut-qtor}

As noted above in \eqref{Neeb-Key-Result1} the essential question here is the determination of the group $\aut(\mathbb Z^n, \lambda) \le \gl(n, \mathbb Z)$ of non-scalar automorphisms. 
The following fact shown in \cite{OP1995} is the basis for our assumption that the group $\Lambda$ is torsion-free. 
\begin{lem}[Lemma 3.3(ii)]\label{Lbda_crul_cse}
Let $p$ denote the size of the torsion subgroup of $\Lambda$. The subalgebra $\widehat{\mathcal O'}$ of $\widehat{\mathcal O}_{\mathfrak q}$ generated by the powers $X_i^{\pm p}$ of the indeterminates $X_i$ is a characteristic sub-algebra of the same rank. Moreover $\widehat {\mathcal O}_{\mathfrak q}$ is free left $\widehat{\mathcal O'}$-module of finite rank and the corresponding $\lambda$-group $\Lambda'$ associated with $\widehat{\mathcal O'}$ is torsion free.
\end{lem}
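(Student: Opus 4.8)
The plan is to work throughout with the $\mathbb Z^n$-grading of the quantum torus, writing $\widehat{\mathcal O}_{\mathfrak q}=\bigoplus_{\mathbf m\in\mathbb Z^n}\mathbb F\,\mathbf X^{\mathbf m}$ with multiplication $\mathbf X^{\mathbf a}\mathbf X^{\mathbf b}=\tau(\mathbf a,\mathbf b)\,\mathbf X^{\mathbf a+\mathbf b}$ for a suitable $2$-cocycle $\tau$ valued in $\mathbb F^\ast$. The first step is to describe $\widehat{\mathcal O'}$ explicitly: since it is generated by the quasi-commuting units $X_i^{\pm p}$, and every product of these is a scalar multiple of some $\mathbf X^{\mathbf m}$ with $\mathbf m\in p\mathbb Z^n$, one has $\widehat{\mathcal O'}=\bigoplus_{\mathbf m\in p\mathbb Z^n}\mathbb F\,\mathbf X^{\mathbf m}$. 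Two of the assertions then follow formally. Decomposing $\mathbb Z^n=\bigsqcup_{\mathbf r}(\mathbf r+p\mathbb Z^n)$ over the $p^n$ coset representatives $\mathbf r$ of $\mathbb Z^n/p\mathbb Z^n$ and using that each $\mathbf X^{\mathbf r}$ is a unit, I would exhibit $\widehat{\mathcal O}_{\mathfrak q}=\bigoplus_{\mathbf r}\widehat{\mathcal O'}\,\mathbf X^{\mathbf r}$ as a free left $\widehat{\mathcal O'}$-module of rank $p^n$; and since $\widehat{\mathcal O'}$ is spanned by the monomials indexed by $p\mathbb Z^n\cong\mathbb Z^n$ it is itself a quantum torus of rank $n$, giving the ``same rank'' claim.

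For the $\lambda$-group $\Lambda'$ I would compute the new multiparameters from $X_i^pX_j^p=q_{ij}^{p^2}X_j^pX_i^p$, so that $\Lambda'=\langle q_{ij}^{p^2}\rangle$ is exactly the image of the $p^2$-power map on $\Lambda$. Because $\Lambda$ is finitely generated and its torsion subgroup $T$ is a finite subgroup of $\mathbb F^\ast$, the group $T$ is cyclic of order $p$; hence $t^{p^2}=1$ for all $t\in T$, the $p^2$-power map annihilates $T$, and writing $\Lambda\cong T\oplus\mathbb Z^r$ gives $\Lambda'\cong p^2\mathbb Z^r$, which is torsion-free. This is where the choice of exponent matters: taking $p$-th powers of the generators produces the $p^2$-power map on $\Lambda$, and since $p=|T|$ this kills all torsion while leaving the free rank $r$ untouched.

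The characteristic property is the crux, and it rests on the quoted fact that the units of a quantum torus are trivial. Given an $\mathbb F$-automorphism $\phi$ of $\widehat{\mathcal O}_{\mathfrak q}$, each image $\phi(X_i)$ is a unit, hence $\phi(X_i)=\alpha_i\mathbf X^{\mathbf m_i}$ for some $\alpha_i\in\mathbb F^\ast$ and $\mathbf m_i\in\mathbb Z^n$; consequently $\phi(X_i^p)=\phi(X_i)^p$ is a scalar multiple of $\mathbf X^{p\mathbf m_i}$, which lies in $\widehat{\mathcal O'}$ since $p\mathbf m_i\in p\mathbb Z^n$. As the $X_i^{\pm p}$ generate $\widehat{\mathcal O'}$, this gives $\phi(\widehat{\mathcal O'})\subseteq\widehat{\mathcal O'}$, and applying the same argument to $\phi^{-1}$ yields the reverse inclusion, hence $\phi(\widehat{\mathcal O'})=\widehat{\mathcal O'}$. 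I expect this to be the main obstacle, not because the computation is hard but because it is the only step using genuine external structural input; once triviality of units is in hand the inclusion is immediate, and every other assertion reduces to bookkeeping with the monomial grading established in the first step.
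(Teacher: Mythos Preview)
Your argument is correct in all parts. However, the paper does not actually supply a proof of this lemma: it is quoted verbatim as Lemma~3.3(ii) of \cite{OP1995} and invoked solely to justify the standing assumption that $\Lambda$ is torsion-free. There is therefore no in-paper proof to compare against; your proposal is essentially the standard argument one would reconstruct from the original source, resting on the monomial grading and the triviality of units in $\widehat{\mathcal O}_{\mathfrak q}$ (the latter fact also being cited in the paper from \cite{MP}).
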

 
 In view of Lemma \ref{Lbda_crul_cse}
 we will assume throughout this section that the group $\Lambda$ is torsion-free.
 We recall that for a given matrix $A \in \gl(n, \mathbb Z)$ the exterior square $\wedge^2 A$ of $A$ is the $\binom{n}{2} \times \binom{n}{2}$-matrix whose rows and columns are indexed by the pairs $(ij)\ (1 \le i < j \le n)$ ordered lexicographically and whose $((ij), (kl))$ entry is the $2 \times 2$-minor corresponding to rows $i,j$ and columns $k,l$. 
 
 By the well-known Cauchy-Binet formula the map $A \mapsto \wedge^2 A$ is multiplicative and satisfies 
 $\det (\wedge^2 A) = (\det A)^{n - 1}$~\cite{HJ}. We also have  $\wedge^2 A^t = (\wedge^2 A)^t$ whete $t$ denotes transposition.
 With $\mathsf M$ as defined in Notation \ref{not1} we have the following.

\begin{theorem_A}
For a quantum torus $\mathcal O_{\mathfrak q} (\mathbb (F^\ast)^n )$ suppose that the group $\Lambda$ generated by the multiparameters $q_{ij}$ is torsion-free. Set $N  =  \binom{n}{2}$ and let $\mathsf M$ be as defined in Notation \ref{not1} above.  Then 
\[  \aut (\mathbb Z^n, \lambda) = \bigl ( \stab_{\gl(n, \mathbb Z)}(\mathsf M) \bigr )^t  \] 
where $t$ denotes transposition and $\stab_{\gl(n, \mathbb Z)}(\mathsf M)$ the stabilizer of $\mathsf M$ in $\gl(n, \mathbb Z)$ with respect to the bivector representation 
\begin{equation*} \displaystyle\extp^2 : \gl(n, \mathbb Z) \rightarrow \gl(N, \mathbb Z), \ \ \ \ \  \  A \rightarrow \wedge^2 (A) 
\end{equation*}  
of $\gl(n, \mathbb Z)$, that is, 
\[ \stab_{\gl(n, \mathbb Z)}(\mathsf M) = \{ A \in \gl(n,Z) \mid (\wedge^2 A)\mathsf M = \mathsf M  \}. \] 
\end{theorem_A}

\begin{proof}
Writing the group $\Lambda \le \mathbb F^{\ast}$ additively, in view of Notation \ref{not1} we have   
\begin{equation}\label{expn_in_gens}
    \lambda(\mathbf e_i, \mathbf e_j) = {m_1}^{(ij)} p_1 +   \cdots + {m_l}^{(ij)} p_l, \ \ \ \ \ \ \ \ \forall 1 \le i < j \le n,
\end{equation} 
where $m_{k}^{(ij)} \in \mathbb Z$.  Thus $\mathsf M = (a_{(ij)s})$ where $a_{(ij)s} = m_s^{(ij)}$.
Now let \[ \mathscr M = (m_{ij}) \in \gl(n, \mathbb Z) \] be such that $\mathscr M^t \in \aut(\mathbb Z^n, \lambda)$.
Setting  \[ \mathbf e_j' = \mathscr M^t \mathbf e_j=  m_{j1}\mathbf e_1 + \cdots +  m_{jn}\mathbf e_n \]
we note that since $\lambda$ is an alternating function therefore $\lambda(\mathbf e_i', \mathbf e_j')$ may be expressed (e.g., \cite[Section 1.2]{MP}) in terms of $\lambda(\mathbf e_i, \mathbf e_j)$ where $i < j$ as follows:
\begin{equation}\label{transf_commtr}
    \lambda(\mathbf e_i', \mathbf e_j') = m_{ij, 12}\lambda(\mathbf e_1, \mathbf e_2) + {m_{ij, 13}}\lambda(\mathbf e_1, \mathbf e_3)  + \cdots + m_{ij, (n-1)n}\lambda(\mathbf e_{n - 1}, \mathbf e_n).  
\end{equation} 
We also note that the coefficients appearing in the RHS of the above expression constitute row (ij) of the matrix $\wedge^2(\mathscr M)$. 
Since $\mathscr M^t$ is $\lambda$-preserving by \eqref{form_prsvng} we have
 \[ \lambda(\mathbf e_i', \mathbf e_j') = \lambda(\mathbf e_i, \mathbf e_j) \ \ \ \ \ \ \ \forall  1 \le i < j \le n. \] 
Expanding and comparing the coefficients of $p_s (s = 1, \cdots, l)$ in both sides of the last equation we get noting 
\begin{equation} \label{coeff_compr}
    m_{ij, 12}m_s^{(12)} + m_{ij, 13}m_s^{(13)} + \cdots + m_{ij, (n-1)n}m_s^{((n-1)n)} =  m_s^{(ij)} \ \ \ \ \ \forall   s = 1, \cdots, l.
\end{equation} 
Letting $(ij)$ take values in the set
\begin{equation}%
 \{(12), (13),\cdots, (n - 1)n \} %
\end{equation}
in equation (\ref{coeff_compr}) and setting \[ \mathsf M^{(s)}  =  (m_s^{(12)}, m_s^{(13)}, \cdots, m_s^{((n - 1)n)})^t \] 
we thus get 
\[ \wedge^2(\mathscr M)\mathsf M^{(s)} = \mathsf M^{(s)} \ \ \ \ \  \ \forall s \in 1, \cdots, l. \]
As $\mathsf M^{(s)}$ is column $s$ of $\mathsf M$ (as defined in Notation \ref{not1}) it follows that:
\[ \wedge^2(\mathscr M)\mathsf M = \mathsf M. \]
Clearly the above reasoning is
 reversible. This establishes the assertion of the theorem.
\end{proof}
\begin{rem}\label{key-rmk}
In the situation of Theorem A since the $N$ elements $\lambda(\mathbf e_i, \mathbf e_j)$ ($1 \le i < j \le n$) generate the group $\Lambda$ therefore the columns of $\mathsf M$ must span a free $\mathbb Z$-submodule of $\mathbb Z^{N}$ of rank $l$. Thus $\mathsf M$ must fix a free submodule of $\mathbb Z^{N}$ of rank $l$.
\end{rem}
The remainder of this section is devoted to the determination of the 
automorphism group of the quantum torus defined by multiparameters satisfying a given set of  conditions. Our line of approach as suggested by Theorem A for determining the matrices $A \in \gl(n, \mathbb Z)$ whose transpose lies in the group $\aut(\mathbb Z^n, \lambda)$ can be summarized as follows.
\begin{equation}
    \boxed{\mbox{Relations matrix $\mathsf M$}}  \longrightarrow \boxed{\mbox{Submodule Fixed by $\wedge^2 A$}} \longrightarrow \boxed{\mbox{$\wedge^2 A$}} \longrightarrow  \boxed{A}.
\end{equation} 
\begin{example}
For $n = 2$, the group $\Lambda$ is infinite cyclic with the generator $q = q_{i2}$ and $\mathsf M$ is the $1 \times 1$ matrix $1$. Clearly, the stabilizer of $\mathsf M$ is $\{1\}$ and thus $\aut(\mathbb Z^2, \lambda) = \mathrm{SL}(2, \mathbb Z)$.   
\end{example}

\begin{example}[\cite{OP1995}] \label{n3r2} 
For $n = 3$ the group $\aut(\mathbb Z^3, \lambda)$ was discussed in \cite{OP1995}. The most non-trivial situation is obtained when $\Lambda \cong \mathbb Z^2$. As shown in the proof of \cite[Proposition 3.7]{OP1995} using a change of variables we may suppose in this case that $q_{12} = 1$ and the set $\{ q_{13}, q_{23} \}$ is independent where $q_{ij} = [X_i, X_j]$ and moreover  
\[  \aut(\mathbb Z^3, \lambda) =   \ \Biggl \{ \begin{pmatrix}
\epsilon & 0 & a \\
0 & \epsilon & b \\
0 & 0 & \epsilon
\end{pmatrix} 
,  \ a,b \in \mathbb Z, \ \epsilon \in \{\pm 1\} \Biggr \}. \] 
Note that in this case   
\[ \mathsf M : = \begin{pmatrix}
0 & 0 \\
 1 & 0 \\
 0 & 1
\end{pmatrix} \ \ \mathrm{and \ therefore \  for}\ A \in \aut(\mathbb Z^3, \lambda), \ \ \wedge^2 A^t = \begin{pmatrix} 1 & 0 & 0 \\
\ast & 1 & 0 \\
\ast & 0 & 1
\end{pmatrix}.   \]
We also note that 
\[ \wedge^2  \begin{pmatrix}
\epsilon & 0 & a \\
0 & \epsilon & b \\
0 & 0 & \epsilon
\end{pmatrix}^t =  \begin{pmatrix}
1 & 0 & 0 \\
b\epsilon & 1 & 0 \\
-a\epsilon & 0 & 1
\end{pmatrix}  \]
as would be expected from Theorem A.
\end{example}
As just seen in Example \ref{n3r2} for $n = 3$ and $\Lambda \cong \mathbb Z^2$ a change of variables leads to a simple set of relations for the commutators $[X_i, X_j]$. In turn this means a simple form for the relations matrix $\mathsf M$ in Theorem A thus facilitating the computation of the non-scalar automorphism group in this case.

The situation is more complex for $n = 4$.
For example, if $n = 4$ and $\Lambda \cong \mathbb Z^5$ one msy expect that a suitable change of variables will allow us to assume, for example, that $q_{12} = 1$. However this is not true by the example of ~\cite[Section 3.11]{MP} and is still not true when $\Lambda \cong \mathbb Z^4$~\cite{GQ06}. 

\begin{prop}\label{smp-cse-4-2}
For $n = 4$ suppose that the group $\Lambda$ is freely generated by the commutators $[X_1, X_4]$, $[X_2, X_3]$,$[X_2, X_4]$ and $[X_3, X_4]$, while 
\[ [X_1, X_2] = [X_1, X_3] = 1.\]
Then 
\[ \aut(\mathbb Z^4, \lambda) = \Biggl \{ f_{b, \epsilon} : = \begin{pmatrix} \epsilon & 0 & 0 & -b \\
0 & \epsilon & 0 & 0 \\
0 & 0 & \epsilon & 0 \\
0 & 0 & 0 & \epsilon \end{pmatrix}, \ \  b \in \mathbb Z, \epsilon \in \{ \pm 1 \} \Biggr \}.\]
\end{prop}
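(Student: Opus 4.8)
The plan is to apply Theorem A directly: first compute the relations matrix $\mathsf{M}$, then translate the stabilizer condition $(\wedge^2 A)\mathsf{M} = \mathsf{M}$ into a short system on the columns of $A$, solve it, and transpose the result. First I would record $\mathsf{M}$. Ordering the pairs as $(12),(13),(14),(23),(24),(34)$ and taking $p_1 = [X_1,X_4]$, $p_2=[X_2,X_3]$, $p_3=[X_2,X_4]$, $p_4=[X_3,X_4]$ as a $\mathbb{Z}$-basis of $\Lambda$, the hypotheses $[X_1,X_2]=[X_1,X_3]=1$ make the rows indexed by $(12)$ and $(13)$ zero while the remaining four rows form the $4\times4$ identity. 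Thus $N=6$, $l=4$, and the four columns of $\mathsf{M}$ are exactly the decomposable bivectors $\mathbf{e}_1\wedge\mathbf{e}_4$, $\mathbf{e}_2\wedge\mathbf{e}_3$, $\mathbf{e}_2\wedge\mathbf{e}_4$, $\mathbf{e}_3\wedge\mathbf{e}_4$ in $\extp^2\mathbb{Z}^4$.

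Writing $\mathbf{c}_k = A\mathbf{e}_k$ for the columns of $A$ and using $(\wedge^2 A)(\mathbf{e}_k\wedge\mathbf{e}_l) = \mathbf{c}_k\wedge\mathbf{c}_l$, the condition $(\wedge^2 A)\mathsf{M}=\mathsf{M}$ becomes the four identities
\[ \mathbf{c}_1\wedge\mathbf{c}_4 = \mathbf{e}_1\wedge\mathbf{e}_4,\quad \mathbf{c}_2\wedge\mathbf{c}_3 = \mathbf{e}_2\wedge\mathbf{e}_3,\quad \mathbf{c}_2\wedge\mathbf{c}_4 = \mathbf{e}_2\wedge\mathbf{e}_4,\quad \mathbf{c}_3\wedge\mathbf{c}_4 = \mathbf{e}_3\wedge\mathbf{e}_4. \]
The key step is the observation that a nonzero decomposable bivector $u\wedge v$ recovers its plane via $\langle u,v\rangle = \{w : w\wedge u\wedge v = 0\}$. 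The first, third and fourth identities therefore place $\mathbf{c}_4$ simultaneously in $\langle\mathbf{e}_1,\mathbf{e}_4\rangle$, $\langle\mathbf{e}_2,\mathbf{e}_4\rangle$ and $\langle\mathbf{e}_3,\mathbf{e}_4\rangle$, whose intersection is the line $\langle\mathbf{e}_4\rangle$. Hence $\mathbf{c}_4 = x\mathbf{e}_4$; comparing the coefficients of $\mathbf{e}_1\wedge\mathbf{e}_4$, $\mathbf{e}_2\wedge\mathbf{e}_4$, $\mathbf{e}_3\wedge\mathbf{e}_4$ forces $x a_{11} = x a_{22} = x a_{33} = 1$, so that $x=\epsilon\in\{\pm1\}$ (integrality of $A$ makes $x$ a unit, the one genuine subtlety) and $a_{11}=a_{22}=a_{33}=\epsilon$, while the $\mathbf{e}_4$-component of $\mathbf{c}_1$ stays free: $\mathbf{c}_1 = \epsilon\mathbf{e}_1 + c\,\mathbf{e}_4$ with $c\in\mathbb{Z}$.

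Substituting $\mathbf{c}_4 = \epsilon\mathbf{e}_4$ into the remaining identity $\mathbf{c}_2\wedge\mathbf{c}_3 = \mathbf{e}_2\wedge\mathbf{e}_3$ then forces the $\mathbf{e}_4$-components of $\mathbf{c}_2$ and $\mathbf{c}_3$ to vanish, pinning down $\mathbf{c}_2 = \epsilon\mathbf{e}_2$ and $\mathbf{c}_3=\epsilon\mathbf{e}_3$. Thus $\stab_{\gl(4,\mathbb{Z})}(\mathsf{M})$ consists exactly of the lower-triangular matrices $\epsilon I + c\,E_{41}$ (with $E_{41}$ the matrix unit in position $(4,1)$, $\epsilon=\pm1$, $c\in\mathbb{Z}$), each of determinant $\epsilon^4=1$ so indeed lying in $\gl(4,\mathbb{Z})$. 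Transposing and setting $b=-c$ gives precisely the matrices $f_{b,\epsilon}$, establishing the proposition.

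I expect the main obstacle to be the geometric reduction that collapses the three coordinate planes to the line $\langle\mathbf{e}_4\rangle$; once this is in hand the rest is bookkeeping. Bypassing it would mean confronting the full set of quadratic $2\times2$-minor equations coming from all six rows of each column of $\wedge^2 A$, whereas the plane-intersection observation makes the column $\mathbf{c}_4$ essentially forced and renders the whole computation transparent.
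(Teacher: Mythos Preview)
Your argument is correct and complete. It is, however, a genuinely different route from the paper's. The paper, after reading off from $\mathsf{M}$ that the last four columns of $\wedge^2 A$ must agree with those of $I_6$, does not reason about the columns of $A$ directly; instead it invokes the fact that $\wedge^2 A$ lies (up to scalar) in the orthogonal group of the Pl\"ucker form $q(\xi)=\xi_{12}\xi_{34}-\xi_{13}\xi_{24}+\xi_{14}\xi_{23}$, shows the scalar must be $\pm1$, solves $(\wedge^2 A)^t P(\wedge^2 A)=P$ for the remaining two columns of $\wedge^2 A$, and then takes an exterior root. Your approach bypasses this Grassmannian/orthogonal-group machinery entirely: by recognising the columns of $\mathsf{M}$ as decomposable bivectors you read the stabilizer condition as $\mathbf{c}_k\wedge\mathbf{c}_l=\mathbf{e}_k\wedge\mathbf{e}_l$ for the four relevant pairs, and the intersection of the three coordinate planes containing $\mathbf{c}_4$ immediately pins it down. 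What you gain is a shorter, more elementary, self-contained computation that never leaves $\extp^2\mathbb{Z}^4$; what the paper's route buys is that it illustrates the general three-step algorithm (Smith form, Pl\"ucker constraints, exterior root) announced after Theorem~A, which is meant to apply even when the columns of $\mathsf{M}$ are not standard decomposable bivectors.
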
 

\begin{proof}
In this case for $A^t \in \aut(\mathbb Z^4, \lambda)$ 
using Theorem A we see that the columns 
$C^{(i)}, \ i = 1, \cdots, 6$ of $\wedge^2 A$ satisfy $C^{(i)} = I^{(i)}, \ \forall i = 3, \cdots, 6$ where $I^{(i)}$ stands for the $i$-th column of the identity matrix $I_6$. Thus only the first two columns need to be determined. To this end we note that by \cite{Nem} the image of $\wedge^2 A$ under the projection $\rho: \gl(6, \bar {\mathbb Q}) \rightarrow \pgl(6, \bar {\mathbb Q})$ preserves 
the projective grassmannian variety $\mathrm{Gr}(2,4)$ embedded in the projective space $\mathrm{P}(\wedge^2 \bar{\mathbb Q}^4)$ where 
$\bar {\mathbb Q}$ stands for the algebraic closure of $\mathbb Q$. 

By a well-known fact~\cite[Section 14.7]{Berg} we know that the group of the projective quadric $\mathrm{Gr}(2,4)$ is the image of the isometry group $\mathrm{O}(\beta)$ in the projective group $\pgl(6, \bar {\mathbb Q})$ where $\beta$ is the 
polarization of the Pl\"{u}cker quadratic form \[q(\xi_{12}, \xi_{13}, \xi_{14}, \xi_{23}, \xi_{24}, \xi_{34}) =  \xi_{12}\xi_{34} 
- \xi_{13}\xi_{24} + \xi_{14}\xi_{23}. \]

It is easily checked that the matrix of the function $\beta$ with respect to the basis $\mathbf e_i \wedge \mathbf e_j \  (i < j)$ is
the matrix
\[ P:= \begin{pmatrix} 0 & 0 & 0 
& 0 & 0 & 1 \\ 0 & 0 & 0 & 0 & -1 & 0 \\ 0 & 0 & 0 & 1 & 0 & 0 \\ 0 & 0 & 1 & 0 & 0 & 0 \\ 0 & -1 & 0 & 0 & 0 & 0 \\ 1 & 0 & 0 & 0 & 0 & 0 \end{pmatrix}. \]
Thus  $(\wedge^2 A) C \in \mathrm{O}(\beta)$ for some scalar matrix $C \in \gl(6,\bar{\mathbb Q})$.
Therefore $\wedge^2 A$ must satisfy  
\begin{equation}    
\label{iso-Pl-fm}
C^2(\wedge^2 A)^t P \wedge^2 A = P.
\end{equation}
As \[ \det \wedge^2 A = (\det A)^3 = \pm 1 \] it is clear from the preceding equation that $\det C = \pm 1$. Writing $C = \diag(\lambda, \cdots , \lambda)$ this means that either $\lambda$ is root of the polynomial $\Phi_-: = Y^6 - 1$ or $\Phi_+: = Y^6 + 1 = 0$.    
Since the matrices $\wedge^2 A$ and $P$ have integer entries it is clear from \eqref{iso-Pl-fm} that $\lambda = \pm 1$ and thus $C = \pm I$. This means that $\wedge^2A  \in \mathrm{O}(\beta)$. Direct calculation using \eqref{iso-Pl-fm} reveals that $\wedge^2 A$ has the form
\[ \wedge^2 A =  \begin{pmatrix} 1 & 0 &0 &0 &0 &0 \\ 0 & 1 & 0 & 0 & 0 & 0 \\ 0 & 0 & 1 & 0 & 0 & 0 \\  0 & 0 & 0 & 1 & 0 & 0 \\ b & 0 & 0 & 0 & 1& 0 \\ 0 & b & 0& 0& 0& 1 \end{pmatrix}, \ \ \ \ \ \ \ \ \   b \in \mathbb Z.\]
As is readily checked the last equation means noting \cite[Corollary 2]{LoLu} that 
\[ A = \epsilon \begin{pmatrix} 1 & 0 & 0 & 0 \\
0 & 1 & 0 & 0 \\
0 & 0 & 1 & 0 \\
-b & 0 & 0 & 1 
\end{pmatrix}, \ \ \ \ \ \epsilon = \pm 1.
\]
and thus \[ \aut(\mathbb Z^4, \lambda) = \Biggl \{ f_{b, \epsilon} : = \begin{pmatrix} \epsilon & 0 & 0 & b \\
0 & \epsilon & 0 & 0 \\
0 & 0 & \epsilon & 0 \\
0 & 0 & 0 & \epsilon \end{pmatrix},\ \ \ \  b \in \mathbb Z, \epsilon \in \{ \pm 1 \} \Biggr \}.\]
 \end{proof}

\begin{example}
Following a similar approach as in Proposition \ref{smp-cse-4-2} we can show that for $n = 4$ assuming that the group $\Lambda$ is freely generated by the commutators  $[X_2, X_3]$,$[X_2, X_4]$ and $[X_3, X_4]$, while 
\[ [X_1, X_2] = [X_1, X_3] =  [X_1, X_4] = 1\]
the non-scalar automorphism group is given in this case by \[ \aut(\mathbb Z^n, \lambda) =  \Biggl \{ \phi_{a,b, \epsilon} : = \begin{pmatrix} \epsilon & a & b & 0 \\
0 & \epsilon & 0 & 0 \\
0 & 0 & \epsilon & 0 \\
0 & 0 & 0 & \epsilon \end{pmatrix}, \ \  a, b \in \mathbb Z, \epsilon \in \{ \pm 1 \} \Biggr \}. \] 
\end{example}

\begin{rem}\label{detmn-stab}
In a more general situation where $\mathsf M$ does not have a simple form as seen in the above examples it is easily checked that \[ \stab_{\gl(N, \mathbb Z)}(\mathsf M) = U^{-1}(\stab_{\gl(N, \mathbb Z) }( U\mathsf M V)U \] where $U\mathsf M V$ is the Smith normal form $\snf(\mathsf M)$ of $\mathsf M$.  

\end{rem}
\begin{prop}
For $n = 4$  suppose that the multiparamters $q_{ij}\ (i,j) \ne (1,2)$ are  independent for $i < j$  and \[ q_{12} = \prod_{i < j, \ (i,j) \ne (1,2)} q_{ij}. \]
Then $\aut(\mathbb Z^4, \lambda) = 
\{\pm I_4 \}$.
\end{prop}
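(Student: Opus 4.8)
The plan is to feed the hypotheses into Theorem~A, so the first task is to write down the relations matrix $\mathsf M$. Taking $p_1 = q_{13},\, p_2 = q_{14},\, p_3 = q_{23},\, p_4 = q_{24},\, p_5 = q_{34}$ as a free basis of $\Lambda$ (so $l = 5$) and recording that $q_{12} = p_1 p_2 p_3 p_4 p_5$ while each remaining $q_{ij}$ is a single generator, I obtain a $6 \times 5$ matrix whose first row is $(1,1,1,1,1)$ and whose lower $5\times 5$ block is the identity. A direct check shows that the five columns of $\mathsf M$ are precisely a $\mathbb Z$-basis of the primitive hyperplane $L = \ker f$, where $f(x) = x_1 - x_2 - x_3 - x_4 - x_5 - x_6$ with coordinates indexed by the pairs $(12),(13),(14),(23),(24),(34)$. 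Hence by Theorem~A the problem reduces to finding all $A \in \gl(4,\mathbb Z)$ for which $\wedge^2 A$ fixes $L$ pointwise.

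Next I would bring in the Pl\"ucker form exactly as in Proposition~\ref{smp-cse-4-2}. The key identity is $(\wedge^2 A)^t P\,(\wedge^2 A) = (\det A)\,P$, which follows from $\wedge^4 A$ acting on $\wedge^4 \mathbb Q^4$ as multiplication by $\det A$ together with $\omega\wedge\omega = 2q(\omega)\, \mathbf e_1\wedge\mathbf e_2\wedge\mathbf e_3\wedge\mathbf e_4$. I would use this to dispose of the case $\det A = -1$: if $\wedge^2 A$ fixes $L$ pointwise and $\det A = -1$, then for every $\omega \in L$ one gets $q(\omega) = q(\wedge^2 A\,\omega) = -q(\omega)$, forcing $q|_L \equiv 0$; but the explicit restriction $q|_L = (\xi_{13}+\xi_{14}+\xi_{23}+\xi_{24}+\xi_{34})\xi_{34} - \xi_{13}\xi_{24} + \xi_{14}\xi_{23}$ is plainly nonzero (e.g.\ at $\xi_{34}=1$ and all other coordinates $0$). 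Therefore $\det A = 1$ and $\wedge^2 A$ is a genuine isometry of the nondegenerate form $\beta$ with matrix $P$.

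With $\det A = 1$ in hand, I would enumerate the isometries of $\beta$ fixing the hyperplane $L$ pointwise. Computing the $\beta$-normal $w = P^{-1} f^t = (-1,1,-1,-1,1,1)^t$, I find $\beta(w,w) = f(w) = -2 \ne 0$, so $w \notin L$ and $\mathbb Q^6 = (L\otimes\mathbb Q)\perp\langle w\rangle$ is an orthogonal decomposition. An isometry fixing $L$ pointwise must send $w$ to $\pm w$, so $\wedge^2 A$ is either $I_6$ or the reflection $r_w$. Since $\det(\wedge^2 A) = (\det A)^3 = 1$ whereas $\det r_w = -1$, the reflection is excluded and $\wedge^2 A = I_6$. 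Finally, as the exterior square root is unique up to sign~\cite{LoLu} and $\wedge^2(\pm I_4) = I_6$, this forces $A = \pm I_4$, whence $\aut(\mathbb Z^4,\lambda) = \{(\pm I_4)^t\} = \{\pm I_4\}$.

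I expect the main obstacle to be the interlocking elimination in the middle steps: one must rule out both the anti-isometric case $\det A = -1$ and the reflection $r_w$, and it is precisely the interplay between the determinant of $\wedge^2 A$ and the scaling of the Pl\"ucker form under $\wedge^2 A$ that makes both eliminations work at once. Verifying that $L$ is a \emph{nondegenerate} hyperplane — equivalently that the normal $w$ satisfies $\beta(w,w)\ne 0$ — is the computational crux on which the clean two-candidate conclusion $\{I_6, r_w\}$ rests; without nondegeneracy the space of isometries fixing $L$ would be far larger and the argument would collapse.
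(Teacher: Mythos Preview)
Your argument is correct and is a genuinely different—and in several respects cleaner—proof than the paper's. The paper proceeds computationally: it puts $\mathsf M$ into Smith normal form, conjugates back (with the aid of SageMath) to obtain the explicit five-parameter family
\[
\stab_{\gl(6,\mathbb Z)}(\mathsf M)=I_6+(S,x,y,z,u,v)^t\cdot(-1,1,1,1,1,1),
\]
and then imposes the Pl\"ucker isometry condition $B^tPB=P$ row-by-row to obtain a nonlinear system in $x,y,z,u,v$ whose unique solution is the origin. You instead recognize that the column span of $\mathsf M$ is the primitive hyperplane $L=\ker f$, pass to the quadratic-form geometry of $\beta$, compute the $\beta$-normal $w=P^{-1}f^t$ to $L$, and check $\beta(w,w)=-2\ne 0$ so that $\mathbb Q^6=L\perp\langle w\rangle$; the only isometries fixing $L$ pointwise are then $I_6$ and the reflection $r_w$, and the determinant identity $\det(\wedge^2A)=(\det A)^3=1$ kills $r_w$. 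Your separate treatment of the case $\det A=-1$ via the nonvanishing of $q|_L$ is a point the paper glosses over by appealing to ``the same reasoning as in Proposition~\ref{smp-cse-4-2}''; in fact the paper's parametrization silently forces $\det B=1$ (it is a rank-one perturbation $I+tr$ with $r\cdot t=0$), so the issue never bites, but your handling is more transparent. What the paper's approach buys is a template that can be rerun mechanically for other $\mathsf M$; what yours buys is a conceptual explanation—everything hinges on $L$ being a \emph{nondegenerate} hyperplane for $\beta$—and no computer algebra.
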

\begin{proof}
Clearly, in this case the relations matrix $\mathsf M$ has the form 
\[ \mathsf M = \begin{pmatrix} 1 & 1 & 1 & 1 & 1 \\ 1 & 0 & 0 & 0 & 0 \\ 0 & 1 & 0 & 0 & 0 \\ 0 & 0 &1 & 0 & 0 \\ 0 & 0 & 0 & 1 & 0 \\ 0 & 0 & 0 & 0 & 1 \end{pmatrix}  \] 
Using the smith form calculator~\cite{keithmatt}  we find
\[ 
 \snf(\mathsf M) =  U \mathsf M V = \begin{pmatrix}
1 & 0 &	0 &	0 &	0 \\
0 &	1 &	0 &	0 &	0 \\
0 &	0 &	1 &	0 &	0 \\
0 &	0 & 0 &	1 & 0 \\
0 &	0 & 0 &0 & 1 \\
0 &	0& 0 & 0& 0
\end{pmatrix}, \ \ \ \ \ \   U =  \begin{pmatrix} 0 & 1 &  0 & 0 
& 0 & 0 & \\ 0 & 0 & 1 & 0 & 0 & 0 \\ 0 & 0 & 0 & 1 & 0 & 0 \\ 0 & 0 & 0& 0& 1 & 0 \\ 0 & 0 & 0 & 0 & 0 & 1 \\ -1 & 1& 1 & 1& 1& 1  \end{pmatrix},  \ \ \  V  = I_5. \] 

Clearly,
\[ \stab_{\gl(N,\mathbb Z)}(U\mathsf M V) = \begin{pmatrix} 1 & 0 & 0 & 0 & 0 & x \\
0 &  1 & 0 & 0 &  0 & y \\
0 & 0 &  1 & 0 & 0 & z \\
0 & 0 & 0 &  1 &  0 & u \\
0 & 0 & 0 & 0 & 1 & v \\
0 & 0 & 0 & 0 & 0 & 1 
\end{pmatrix},  \ \ \ \ \ \  x,y,z, u, v \in \mathbb Z .\] 
Using Remark \ref{detmn-stab} and calculating with the help of SageMath~\cite{sagemath} we find that  \[ \stab_{\gl(N,\mathbb Z)}(\mathsf M) =
\begin{pmatrix}
1 -S   &   S   &   S   &   S   &  S   &   S\\
-x  &   x + 1   &  x & x &  x  & x \\
-y   &  y    &  y + 1  & y  &   y   &   y \\
-z & z  &  z &  z + 1   &  z &  z \\
-u  &   u  & u  &  u &  u + 1  & u\\
-v  &   v  & v  &  v  & v  & v + 1 
\end{pmatrix}, \ \ \ S =  u + v + x + y + z.
\]


For a matrix $B \in \extp^2 (\gl(n, \mathbb Z) \cap \stab_{\gl(N, \mathbb Z)}(\mathsf M)$ by the same reasoning as in Proposition \ref{smp-cse-4-2} we obtain 
$B^t P B = P$. Comparing the first rows in both sides we obtain 
\begin{align*}
    T -2v &= 0 \\
    u + v - T &= 0 \\
    v - z - T &= 0 \\
    v - y - T &= 0 \\
    v + x - T &= 0 \\
    u + x + y + z + T &= 0  
\end{align*}
where $T =  2uv + 2v^2 - 2ux + 2vx + 2vy + 2vz + 2yz$. It is easily seen that this system has a unique solution $x = y = z = u = v = 0$. This completes our proof. 

\end{proof}
 
It was shown in \cite{OP1995} that if the $\binom{n}{2}$ multiparameters $q_{ij} (1 \le i < j \le n)$ are independent in $\mathbb F^\ast$ then $\aut(\mathbb Z^n, \lambda) \cong \mathbb Z_2$. With the help of Theorem A we show in the next proposition that the same conclusion remains valid under a somewhat weaker hypothesis. 

\begin{prop}\label{
Theorem-B}
For a quantum torus $\mathcal O_{\mathfrak q} (\mathbb (F^\ast)^n )$ with the $n - 3$ multiparameters \[ q_{12},q_{13}, \cdots, q_{1(n-2)}\] set to one and the remaining multiparameters \[ q_{1(n-1)}, q_{1n}, q_{23}, q_{24}, \cdots, q_{(n -1)n} \] independent in $\mathbb F^\ast$
\[ \aut(\mathbb Z^n, \lambda) \cong \mathbb Z_2\] 
and consequently by \eqref{Neeb-Key-Result1} we have  \[ 1  \rightarrow \Hom(\mathbb Z^n, {\mathbb {F}}^\ast) \rightarrow \aut(\mathcal O_{\mathfrak q}({F^\ast}^n)) \rightarrow \mathbb Z_2 \rightarrow 1. \] 

\end{prop}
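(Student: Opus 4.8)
The plan is to apply Theorem A directly, exploiting the very simple shape of the relations matrix $\mathsf M$ forced by the hypotheses. First I would record that, since $\Lambda$ is freely generated by the independent parameters, each nonzero commutator $\lambda(\mathbf e_i,\mathbf e_j)=q_{ij}$ equals a single generator, while $\lambda(\mathbf e_1,\mathbf e_j)=q_{1j}=1$ for $2\le j\le n-2$. Consequently, in the sense of Notation \ref{not1}, the rows of $\mathsf M$ indexed by $(12),(13),\dots,(1(n-2))$ vanish and every remaining row is a distinct standard basis vector of $\mathbb Z^l$; hence the columns of $\mathsf M$ are exactly the standard basis vectors $\mathbf e_{(ij)}$ of $\mathbb Z^N$ indexed by the pairs in
\[ \mathcal I=\{(ij):2\le i<j\le n\}\cup\{(1,n-1),\,(1,n)\}. \]
By Theorem A it then suffices to determine all $A\in\gl(n,\mathbb Z)$ for which $\wedge^2 A$ fixes each $\mathbf e_{(ij)}$ with $(ij)\in\mathcal I$; equivalently, writing $\mathbf a_k:=A\mathbf e_k$ for the columns of $A$, all $A$ with $\mathbf a_i\wedge\mathbf a_j=\mathbf e_i\wedge\mathbf e_j$ for every $(ij)\in\mathcal I$.

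Next I would treat the pairs with both indices at least $2$. Restricting the identities $\mathbf a_i\wedge\mathbf a_j=\mathbf e_i\wedge\mathbf e_j$ (for $2\le i<j\le n$) to the coefficients indexed by pairs $(pq)$ with $p,q\ge 2$ says precisely that the lower-right $(n-1)\times(n-1)$ block $B=(A_{pq})_{p,q\ge 2}$ satisfies $\wedge^2 B=I$. Taking determinants gives $(\det B)^{n-2}=1$, so $B\in\gl(n-1,\mathbb Z)$, and since $n-1\ge 3$ the uniqueness of the exterior square root \cite[Corollary 2]{LoLu} forces $B=\epsilon I_{n-1}$ for a single sign $\epsilon\in\{\pm1\}$. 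Reading off instead the coefficients indexed by pairs $(1q)$, $q\ge 2$, in the same identities yields $\epsilon\bigl(A_{1i}\delta_{qj}-A_{1j}\delta_{qi}\bigr)=0$; for each fixed $i\ge 2$ one picks some $j\ge 2$ with $j\ne i$ and specializes $q=j$, which gives $A_{1i}=0$. Thus the columns $\mathbf a_2,\dots,\mathbf a_n$ equal $\epsilon\mathbf e_2,\dots,\epsilon\mathbf e_n$.

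It then remains to use the two pairs $(1,n-1)$ and $(1,n)$. Substituting $\mathbf a_{n-1}=\epsilon\mathbf e_{n-1}$ and $\mathbf a_n=\epsilon\mathbf e_n$ into $\mathbf a_1\wedge\mathbf a_{n-1}=\mathbf e_1\wedge\mathbf e_{n-1}$ and $\mathbf a_1\wedge\mathbf a_n=\mathbf e_1\wedge\mathbf e_n$ and comparing coefficients pins down the first column: the coefficient of $\mathbf e_1\wedge\mathbf e_{n-1}$ forces $A_{11}=\epsilon$, and the remaining coefficients force $A_{p1}=0$ for all $p\ge 2$. Hence $A=\epsilon I_n$, and therefore $\aut(\mathbb Z^n,\lambda)=\{A^t:A=\pm I_n\}=\{\pm I_n\}\cong\mathbb Z_2$. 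The displayed short exact sequence is then immediate from \eqref{Neeb-Key-Result1}, identifying $\mathscr A=\aut(\mathcal O_{\mathfrak q}({(\mathbb F^\ast)}^n))$ and $\mathscr S\cong(\mathbb F^\ast)^n\cong\Hom(\mathbb Z^n,\mathbb F^\ast)$.

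The bookkeeping is light, and the single genuine pressure point is the passage from $\wedge^2 B=I$ to $B=\pm I$, which is exactly where the hypothesis enters quantitatively: it requires $n-1\ge 3$, both so that \cite{LoLu} applies and so that every index $\ge 2$ lies in a pair drawn entirely from $\{2,\dots,n\}$ (used in the first-row vanishing). For $n-1=2$ the condition $\wedge^2 B=I$ degenerates to $\det B=1$ and the argument collapses, consistently with the larger automorphism groups seen for small $n$ in the preceding examples. A secondary point to verify carefully is that the sign $\epsilon$ produced by the block $B$ is the same one forced on the first column, so that no mixed-sign automorphisms slip through; this approach is also cleaner than the Plücker-quadric route of Proposition \ref{smp-cse-4-2}, which it bypasses entirely.
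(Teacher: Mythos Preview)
Your proof is correct and follows essentially the same route as the paper's: identify $\mathsf M$ via Theorem A, observe that $\wedge^2 A$ must fix the standard basis vectors indexed by $\mathcal I$, deduce that the lower-right $(n-1)\times(n-1)$ block has exterior square equal to the identity and hence equals $\epsilon I_{n-1}$ by \cite{LoLu}, then use the remaining constraints to pin down the first row and column. The only difference is cosmetic---you phrase the computations via the wedge identities $\mathbf a_i\wedge\mathbf a_j=\mathbf e_i\wedge\mathbf e_j$ while the paper reads off specific entries $b_{(ij),(kl)}$ of $\wedge^2 A$; one small point to tighten is that the vanishing of $A_{p1}$ for \emph{all} $p\ge 2$ genuinely requires \emph{both} equations $\mathbf a_1\wedge\mathbf a_{n-1}=\mathbf e_1\wedge\mathbf e_{n-1}$ and $\mathbf a_1\wedge\mathbf a_n=\mathbf e_1\wedge\mathbf e_n$ (each individually misses one index), which you use but do not quite say.
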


\begin{proof}
From the theorem hypothesis it is evident that in this case the relations matrix $\mathsf M$ of Theorem A   
is obtained from the identity matrix $I_{\binom{n}{2}}$ by deleting  the first $n - 3$ columns.
Consequently, any matrix $B \in \stab(\mathsf M)$ must coincide with $I_{\binom{n}{2}}$ in all but the first $n - 3$ columns, that is, $B$ must have the form
\begin{equation}
\label{form-of-B}
B = 
\left(
\begin{BMAT}(c){ccc}{ccc}
\ast \cdots \ast & & \\ 
 \ast \cdots \ast & \begin{BMAT}(c){ccc}{ccc}
 & & \\
 & 1 & \\
 & & 1 
\end{BMAT} &  \\
\ast \cdots \ast &  &  I_{\binom{n - 1}{2}} 
\end{BMAT}
\right ).
\end{equation}

Suppose that $B = (b_{ij, kl})$ is induced from a matrix $A := (a_{uv}) \in \gl(n, \mathbb Z)$, that is, $B = \wedge^2 (A)$. 
The $(n - 1) \times (n - 1)$-submatrix $A^+$ of $A$ formed by the rows $2, \cdots, n$ and columns  $2, \cdots, n$ satisfies
$\wedge^2 (A^+) = I_{\binom{n-1}{2}}$. 
As is well known \[ \det(\wedge^2 (A^+)) =  \det(A^+)^{n -2} \] 
and hence $A^+$ is also nonsingular. Applying  
\cite[Corollary 2]{LoLu} we obtain 
\begin{equation}\label{sbmatxfrm}
A^+ = \epsilon  I_{n - 1}, \ \ \ \ \ \ \ \  \epsilon  \in \{-1, +1\}.  
\end{equation}     

In view of \eqref{form-of-B} we clearly have
\begin{equation*} b_{12,2j} = 0 \ \ \ \ \ \ \ \ \ \ \forall j = 3, \cdots, n .\end{equation*} 
Computing the  $2 \times 2$ minors of $A$  corresponding to the $(12, 2j)$ entries of $B$ with the help of \eqref{sbmatxfrm} it is immediately seen that the $1j$ entry $a_{1j}$ of $A$ equals to zero. Similarly, $b_{13,23} = 0$ implies $a_{12} = 0$. 
By the same token using the evident fact that
\[ b_{jn,1n} = 0 \ \ \ \ \ \ \ \ \ \  \forall  j = 2, \cdots, n - 1 \] we obtain $a_{j1} = 0$. Again $b_{(n-1)n, 1(n-1)} = 0$ implies $a_{n1} = 0$.  

It now follows that $A$ is diagonal and moreover $A \in \gl(n, \mathbb Z)$ implies 
\begin{equation}\label{} 
a_{11} =  \epsilon',  \ \ \ \ \ \ \ \ \ \ \epsilon' \in \{-1, +1\}
\end{equation}
But $\epsilon = \epsilon'$ as otherwise \[ b_{(1n),(1n)} = \epsilon'\epsilon = - 1 \]  
which contradicts the form of $B$ as noted in \eqref{form-of-B}.
Thus $A = \epsilon I_n$ and the proof is complete.   
\end{proof}

\section{Automorphisms of quantum affine spaces - preliminary facts}
Automorphisms of the quantum affine spaces  $\mathcal O_{\mathfrak q}$ were considered in \cite{AC1992, ART1997, OP1995}. In our theorems to follow we shall be utilizing the definitions, facts and results in these articles which we briefly recall here and also add a few easy propositions.
\begin{defn}[Section 1.4 of \cite{AC1992}]
An automorphism $\sigma$ of $\mathcal O_{\mathfrak q}$ is called linear if it has the form
\begin{equation}\label{lin_aut_def} 
\sigma(X_i)= \sum_{j = 1}^n \alpha_{ij}X_j \ \ \ \ \ \ \ \forall i \in \{1, \cdots, n\}, \ \ \ \ (\alpha_{ij}) \in \gl(n, \mathbb F).
\end{equation}
\end{defn}
In other words, an automorphism is linear if and only if it preserves the degree one component of 
the algebra $\mathcal O_{\mathfrak q}$ with respect to the $\mathbb N_0$-grading by total degree.
The subgroup of linear automorphisms is denoted as $\aut_{\mathrm L}(\mathcal O_{\mathfrak q})$.
For a matrix $(\alpha_{ij}) \in \gl(n, \mathbb F)$ to define an automorphism as in \eqref{lin_aut_def} the   
following necessary and sufficient conditions must hold (\cite{AC1992}):
\begin{equation}\label{AC-cond-lin-aut}
\alpha_{ik} \alpha_{jl}(1 - q_{ij} q_{lk}) = \alpha_{il} \alpha_{jk}(q_{ij} - q_{lk})  \ \ \ \ \ \ \ \forall i < j , \ \ \forall k \le l.  
  \end{equation}
The last equation may be re-written as 
\begin{equation}\label{AC-cond-lin-aut-re}
   \alpha_{ik} \alpha_{jl}(q_{kl} - q_{ij}) = \alpha_{il} \alpha_{jk}(q_{kl}q_{ij} - 1)  \  \ \ \ \ \ \forall i < j , \ \ \forall k \le l. 
\end{equation}
Setting $k = l$ in the last equation we obtain 
\begin{equation}\label{setkeql}
\alpha_{ik} \alpha_{jk} (1 - q_{ij}) = \alpha_{ik} \alpha_{jk}(q_{ij} - 1)    \ \ \ \ \  \ \forall i < j, \ \ \forall k \in \{1, \cdots, n\} . 
\end{equation}

\begin{observation}\label{avd-1}
Clearly, the last equation means that if $\chc(\mathbb F) \ne 2$ and none of the multiparameters $q_{ij} (i < j)$ equals to unity  
then at least one of the coefficients $\alpha_{ik}$ and $\alpha_{jk}$ vanishes.
It is immediate that in this case the nonsingular matrix $(\alpha_{ij})$ has exactly one nonzero entry in each row and each column. 
\end{observation}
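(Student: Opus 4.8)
The plan is to start from equation \eqref{setkeql} and simplify it algebraically. Collecting both sides, the identity $\alpha_{ik}\alpha_{jk}(1 - q_{ij}) = \alpha_{ik}\alpha_{jk}(q_{ij} - 1)$ rearranges to $2\alpha_{ik}\alpha_{jk}(1 - q_{ij}) = 0$ for every pair $i < j$ and every column index $k$. Under the two standing hypotheses --- namely $\chc(\mathbb F) \neq 2$, so that $2$ is invertible in $\mathbb F$, and $q_{ij} \neq 1$, so that $1 - q_{ij} \neq 0$ --- both $2$ and $1 - q_{ij}$ are units. Cancelling them forces $\alpha_{ik}\alpha_{jk} = 0$, and since $\mathbb F$ is a field and hence an integral domain, this yields $\alpha_{ik} = 0$ or $\alpha_{jk} = 0$. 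This is exactly the first assertion.

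Next I would translate this into a statement about the columns of $(\alpha_{ij})$. Fixing a column index $k$ and letting $i < j$ range freely, the conclusion $\alpha_{ik}\alpha_{jk} = 0$ says precisely that no two distinct entries of column $k$ can both be nonzero; equivalently, each column contains \emph{at most one} nonzero entry. Because $(\alpha_{ij}) \in \gl(n, \mathbb F)$ is nonsingular, no column can be identically zero, so each of the $n$ columns contains \emph{exactly one} nonzero entry.

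Finally I would upgrade ``one nonzero per column'' to the full monomial (generalized-permutation) structure. Record the position of the unique nonzero entry of column $k$ as a row index $\pi(k)$. If $\pi$ failed to be injective, two distinct columns would have their only nonzero entries in a common row, making both columns scalar multiples of a single standard basis vector, hence linearly dependent --- contradicting nonsingularity. Thus $\pi$ is a bijection of $\{1, \dots, n\}$, the $n$ nonzero entries occupy $n$ distinct rows, and there is exactly one nonzero entry in each row as well, establishing that $(\alpha_{ij})$ is a monomial matrix.

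As for the main obstacle, there is essentially none: every step is elementary. The only point deserving a word of care is the last one, where nonsingularity must be invoked a second time --- beyond merely ruling out zero columns --- to pass from ``at most one nonzero entry per column'' to genuine permutation structure across the rows; the short linear-dependence argument above handles it.
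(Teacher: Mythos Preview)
Your argument is correct and complete. The paper itself does not supply a proof of this Observation --- it simply records the conclusions as ``clearly'' and ``immediate'' following \eqref{setkeql} --- so your write-up is exactly the routine verification the paper leaves to the reader, carried out in the natural way.
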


The next proposition is an easy consequence of the preceding observation.

\begin{prop}\label{simple-cse-q(ij)-not-1}
Suppose that $\chc(\mathbb F) \ne 2$ and the entries of $\mathfrak q$ satisfy \begin{equation} \label{sim-cond1}
q_{ij} \ne 1 \ \ \ \ \ \ \ \ \ \ \  \  \forall 1 \le i < j \le n.  \tag{$\blacklozenge$}
\end{equation}
Then 
\begin{equation}\label{lin_aut}
\aut_{\mathrm L}(\mathcal O_{\mathfrak q}) \cong {\mathbb F^\ast}^n \ltimes \mathcal{P}  
\end{equation}
for a subgroup $\mathcal P$ of $S_n$.
\end{prop}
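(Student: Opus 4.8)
The plan is to leverage Observation \ref{avd-1} to reduce the matrix of any linear automorphism to a generalized permutation (monomial) matrix, and then to factor such matrices into a diagonal (toric) part and a permutation part. First I would recall that, under the standing hypotheses $\chc(\mathbb F) \ne 2$ together with \eqref{sim-cond1}, Observation \ref{avd-1} tells us that the invertible matrix $(\alpha_{ij})$ of any $\sigma \in \aut_{\mathrm L}(\mathcal O_{\mathfrak q})$ has exactly one nonzero entry in each row and in each column. Thus $(\alpha_{ij})$ is a monomial matrix and admits a unique factorization $(\alpha_{ij}) = D P_\pi$, where $D = \diag(d_1, \dots, d_n)$ lies in $(\mathbb F^\ast)^n$ and $P_\pi$ is the permutation matrix of some $\pi \in S_n$. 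This embeds $\aut_{\mathrm L}(\mathcal O_{\mathfrak q})$ into the group of monomial matrices, which is exactly the semidirect product $(\mathbb F^\ast)^n \rtimes S_n$.

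The next step is to show that the diagonal part is unconstrained, i.e. the entire torus $(\mathbb F^\ast)^n$ consists of automorphisms. This is a direct check: a diagonal map $X_i \mapsto d_i X_i$ sends the defining expression $X_iX_j - q_{ij}X_jX_i$ to $d_id_j(X_iX_j - q_{ij}X_jX_i)$, hence preserves every relation and is an automorphism. (Equivalently, the conditions \eqref{AC-cond-lin-aut} hold trivially when $(\alpha_{ij})$ is diagonal, using $q_{ji} = q_{ij}^{-1}$.) Consequently $(\mathbb F^\ast)^n \le \aut_{\mathrm L}(\mathcal O_{\mathfrak q})$, and since conjugating a diagonal matrix by a monomial matrix merely permutes and rescales its diagonal entries, the torus is a normal subgroup of $\aut_{\mathrm L}(\mathcal O_{\mathfrak q})$.

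To isolate the permutation part I would set
\[ \mathcal P := \{ \pi \in S_n : P_\pi \in \aut_{\mathrm L}(\mathcal O_{\mathfrak q}) \}. \]
Given any $\sigma \in \aut_{\mathrm L}(\mathcal O_{\mathfrak q})$ with matrix $DP_\pi$ as above, the fact that $D$ is itself an automorphism forces $P_\pi = D^{-1}(DP_\pi)$ to be an automorphism, whence $\pi \in \mathcal P$; conversely every product $DP_\pi$ with $D \in (\mathbb F^\ast)^n$ and $\pi \in \mathcal P$ is an automorphism. Therefore $\aut_{\mathrm L}(\mathcal O_{\mathfrak q}) = (\mathbb F^\ast)^n \cdot \mathcal P$ with $(\mathbb F^\ast)^n$ normal and $(\mathbb F^\ast)^n \cap \mathcal P = \{1\}$, giving the asserted internal semidirect product $(\mathbb F^\ast)^n \ltimes \mathcal P$.

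The essential content is already carried by Observation \ref{avd-1}; what remains is structural bookkeeping whose one subtle point is to exhibit the permutation assignment $DP_\pi \mapsto \pi$ as a genuine group homomorphism on $\aut_{\mathrm L}(\mathcal O_{\mathfrak q})$, so that $\mathcal P$ is visibly a subgroup rather than just a subset. The cleanest way to do this is to note that this assignment is the restriction to $\aut_{\mathrm L}(\mathcal O_{\mathfrak q})$ of the canonical quotient map from the monomial group $(\mathbb F^\ast)^n \rtimes S_n$ onto $S_n$: its kernel is precisely the full torus established above and its image is $\mathcal P$, so $\mathcal P \le S_n$ automatically and the short exact sequence $1 \to (\mathbb F^\ast)^n \to \aut_{\mathrm L}(\mathcal O_{\mathfrak q}) \to \mathcal P \to 1$ splits via the permutation matrices, yielding the semidirect product. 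The remaining verifications (the monomial factorization, the diagonal computation, and the normality of the torus) are all routine.
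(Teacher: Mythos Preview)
Your proposal is correct and takes essentially the same approach as the paper, which simply declares the proposition ``an easy consequence of the preceding observation'' (Observation~\ref{avd-1}) without supplying further details. You have filled in precisely the routine bookkeeping the paper omits---the monomial factorization, the normality of the torus, and the splitting via permutation matrices---so there is nothing to add.
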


Besides the rather simple condition in \eqref{sim-cond1} there are other situations where \eqref{lin_aut} holds. Indeed as shown in \cite{OP1995} this is the case if the following condition is satisfied:

\begin{equation} \label{OP-cond} \tag{$\lozenge$}
\mathrm{The\ \  localization}\ \  \widehat{\mathcal  O}_{\mathfrak q} \  \ \mathrm{of} \ \  \mathcal  O_{\mathfrak q} \ \  \mathrm{has\ \ center} \ \  \mathbb F.
\end{equation}
In view of the foregoing we make the following definition.

\begin{defn}
Given a quantum affine space $\mathcal  O_{\mathfrak q}$ a permutation $\pi \in S_n$ is said to be \emph{admissible} if  $\pi \in \aut(\mathcal  O_{\mathfrak q})$.
\end{defn}

By the remark following \cite[Proposition 3.2]{OP1995} a permutation $\pi$ is admissible if and only if the any of the following equivalent pair of conditions holds for the corresponding permutation matrix $\mathfrak p$.

\begin{equation}\label{OP-criterion2}
\mathfrak p \mathfrak q \mathfrak p^t = \mathfrak q     \ \ \ \ \ \ \Leftrightarrow \ \ \ \ \ \   \mathfrak p \mathfrak q = \mathfrak q \mathfrak p. 
\end{equation}

The next lemma and proposition record some consequences of  a permutation $\pi$ being admissible. 
\begin{lem}\label{fund_lemma}
For a given quantum affine space $\mathcal O_{\mathfrak q}$ if a permutation $\pi$ is admissible then 
\begin{itemize}
   \item[(i)] \noindent  for each $r$-cycle $(j_1j_2 \cdots j_r)$, where $r \ge 2$ in the decomposition of $\pi$  
\begin{equation*}\label{q-relatn-cycle}
    q_{j_rj_1} =  q_{j_1j_2} = q_{j_2j_3} = \cdots = q_{j_{r-2}j_{r - 1}}  = q_{j_{r-1}j_{r}},   \ \ \ \ \mbox{and} 
\end{equation*}    

   \item[(ii)] \noindent for each fixed point $k$ of $\pi$ and each $r$-cycle $(j_1j_2 \cdots j_r)$ in the the decomposition of $\pi$ 
\begin{equation*}\label{f-pts}
q_{j_1k} = q_{j_2k} = \cdots = q_{j_rk}.
\end{equation*}    

\end{itemize}
\end{lem}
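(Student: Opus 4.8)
The plan is to reduce both parts to a single pointwise identity on the entries of $\mathfrak q$. By the criterion \eqref{OP-criterion2}, $\pi$ being admissible is equivalent to $\mathfrak p \mathfrak q \mathfrak p^t = \mathfrak q$, where $\mathfrak p$ is the permutation matrix of $\pi$. First I would read off the $(i,j)$ entry of this matrix equation: with the convention $\mathfrak p \mathbf e_j = \mathbf e_{\pi(j)}$ one computes $(\mathfrak p \mathfrak q \mathfrak p^t)_{ij} = q_{\pi^{-1}(i),\, \pi^{-1}(j)}$, so that $\mathfrak p \mathfrak q \mathfrak p^t = \mathfrak q$ is equivalent to
\[ q_{\pi(i),\, \pi(j)} = q_{ij} \qquad \forall\, i,j. \]
This invariance of $\mathfrak q$ under simultaneous relabelling of rows and columns by $\pi$ is the only fact both parts will use.

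For part (i), I would fix an $r$-cycle $(j_1 j_2 \cdots j_r)$ of $\pi$, so that $\pi(j_s) = j_{s+1}$ with indices read cyclically modulo $r$ (in particular $\pi(j_r) = j_1$). Applying the identity above to the pair $(j_s, j_{s+1})$ gives $q_{j_{s+1},\, j_{s+2}} = q_{j_s,\, j_{s+1}}$. Letting $s$ run over $1, \dots, r$ and using the cyclic indexing links the consecutive-pair parameters into one chain of equalities
\[ q_{j_1 j_2} = q_{j_2 j_3} = \cdots = q_{j_{r-1} j_r} = q_{j_r j_1}, \]
which is exactly the asserted relation, the wrap-around term $q_{j_r j_1}$ coming from the step $s = r$.

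For part (ii), I would let $k$ be a fixed point of $\pi$, so $\pi(k) = k$, and apply the same identity to the pairs $(j_s, k)$. Since $\pi(j_s) = j_{s+1}$ and $\pi(k) = k$, this yields $q_{j_{s+1},\, k} = q_{j_s,\, k}$, and running $s$ around the cycle telescopes these into $q_{j_1 k} = q_{j_2 k} = \cdots = q_{j_r k}$, the closing equality again being furnished by $\pi(j_r) = j_1$. The argument has no genuine obstacle; the only point requiring care is bookkeeping, namely pinning down the matrix-to-permutation dictionary so that $\mathfrak p \mathfrak q \mathfrak p^t = \mathfrak q$ translates to $q_{\pi(i),\pi(j)} = q_{ij}$ rather than to its inverse, and handling the cyclic wrap-around so that each chain of equalities closes up instead of leaving a gap at the final link.
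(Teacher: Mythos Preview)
Your proof is correct and follows essentially the same approach as the paper's. The paper applies the automorphism $\pi$ directly to the defining relation $X_{j_t}X_{j_{t\dot+1}} = q_{j_t j_{t\dot+1}}X_{j_{t\dot+1}}X_{j_t}$ to obtain $q_{j_{t\dot+1} j_{t\dot+2}} = q_{j_t j_{t\dot+1}}$, which is exactly your identity $q_{\pi(i),\pi(j)} = q_{ij}$ specialized to consecutive cycle indices; your derivation via the matrix criterion \eqref{OP-criterion2} is simply the same computation rewritten entrywise.
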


\begin{proof}
(i)  This follows easily by applying $\pi$ to \[ X_{j_t}X_{j_{t \dot{+} 1}} = q_{j_tj_{t \dot{+} 1}}X_{j_{t \dot{+} 1}}X_{j_t} \] where $\dot{+}$ denotes addition modulo $r$. As a result we obtain 
 \[ X_{j_{t \dot{+} 1}}X_{j_{t \dot{+} 2}} = q_{j_tj_{t \dot{+} 1}}X_{j_{t \dot{+} 2}}X_{j_{t \dot{+} 1}} \] 
for all $t \ge 1$ running modulo $r$. Part (ii) is similar to (i).
\end{proof}

\begin{prop}\label{prdt_pr_trvl}
Suppose that the group $\aut(\mathcal O_{\mathfrak q})$ contains a non-trivial permutation $\pi$.  Then there exists a relation
\begin{equation}
 q_{ij}q_{kl} = 1, \ \ \ \ \ (i,j),  (k,l) \in \{1,\cdots, n\}^2
\end{equation}
such that  $i < j$ and $k < l$. \end{prop}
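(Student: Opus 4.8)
The plan is to reduce everything to a single nontrivial cycle of $\pi$ and then play off the chain of equalities from Lemma \ref{fund_lemma}(i) against the multiplicative antisymmetry $q_{ji} = q_{ij}^{-1}$ of $\mathfrak q$. Since $\pi$ is non-trivial, its cycle decomposition contains at least one cycle $(j_1 j_2 \cdots j_r)$ of length $r \ge 2$. As $\pi$ is admissible, Lemma \ref{fund_lemma}(i) applies to this cycle and yields the chain of equalities
\[ q_{j_r j_1} = q_{j_1 j_2} = q_{j_2 j_3} = \cdots = q_{j_{r-1} j_r} =: c, \]
so every one of the $r$ directed ``cyclic edges'' $j_t \mapsto j_{t+1}$ (indices taken modulo $r$) carries the same scalar $c$.

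Next I would examine how each directed edge looks once its two indices are put in increasing order. If $j_t < j_{t+1}$ (an \emph{ascent}), the corresponding entry of $\mathfrak q$ is already sorted and equals $q_{j_t j_{t+1}} = c$; if $j_t > j_{t+1}$ (a \emph{descent}), then by antisymmetry the sorted entry is $q_{j_{t+1} j_t} = c^{-1}$. The crucial combinatorial observation is that a cyclic sequence of distinct integers cannot be monotone: locating the largest index occurring in the cycle, the edge entering it is forced to be an ascent while the edge leaving it is forced to be a descent. Hence among the $r$ directed edges at least one is an ascent and at least one is a descent.

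Finally, letting $(i,j)$ with $i<j$ be the sorted indices of an ascending edge and $(k,l)$ with $k<l$ those of a descending edge, we have $q_{ij} = c$ and $q_{kl} = c^{-1}$, whence $q_{ij}\,q_{kl} = 1$, which is exactly the asserted relation. I do not expect a serious obstacle here; the only point needing a moment of care is the degenerate case $r = 2$, where the entering and leaving edges at the maximum are the same unordered pair $\{j_1, j_2\}$. In that situation the argument collapses to $c = c^{-1}$, i.e. $q_{ij}^2 = 1$, and one simply takes $(i,j) = (k,l)$; this is legitimate since the statement imposes no distinctness on the two pairs.
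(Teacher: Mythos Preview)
Your proof is correct and follows essentially the same approach as the paper's: both pick a nontrivial cycle, invoke Lemma~\ref{fund_lemma}(i) to equate all the $q_{j_t j_{t+1}}$, and then exploit the fact that the cyclic sequence $j_1,\dots,j_r$ must contain both an ascent and a descent to produce the relation $q_{ij}q_{kl}=1$ with $i<j$, $k<l$. The only cosmetic differences are that the paper observes the existence of opposite-sign differences $j_{t+1}-j_t$ directly (their sum around the cycle is zero), whereas you locate them at the maximum index, and that you spell out the $r=2$ case explicitly while the paper's wording covers it implicitly.
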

\begin{proof}
If $(j_1j_2 \cdots j_r)$ is a cycle in the decomposition of $\pi$ then by Lemma \ref{fund_lemma} we have
\begin{equation}\label{cycl-relns}
q_{j_rj_1} =  q_{j_1j_2} = q_{j_2j_3} = \cdots = q_{j_{r-2}j_{r - 1}}  = q_{j_{r-1}j_{r}},     
\end{equation}
Evidently the sequence of differences $j_{t \dot{+} 1} - j_{t}$ in \eqref{cycl-relns} where $t$ varies modulo $r$ in the set $\{1, \cdots, r \}$ has both positive as well as negative terms.  We can thus find $u, v$ ($u \ne v$) and such that 
$j_{u \dot{+} 1} - j_u$ and $j_{v \dot{+} 1} - j_v$ have opposite signs.
Without loss of generality we may assume that $j_{u +1} - j_u < 0$. 
In view of \eqref{cycl-relns} we now obtain:
\[ q_{j_{v}j_{v + 1}}q_{j_{u + 1}j_{u}} = 1. \]

\end{proof}

Combining the above facts immediately yields a criterion for ensuring that all linear automorphisms of a quantum affine space result from the action of the torus. 

\begin{cor}
Suppose that $\chc(\mathbb F) \ne 2$ and the multiparameters satisfy:   
\begin{equation} \label{prdts-ne-1-stng}
 q_{ij}q_{kl} \ne 1\ \mbox{for all} \  i < j \ \mbox{and} \ k < l.    \tag{$\clubsuit$}
\end{equation} Then
\[ \aut_{\mathrm L}(\mathcal O_{\mathfrak q}) = (\mathbb F^\ast)^n. \]
\end{cor}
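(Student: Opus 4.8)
The plan is to combine the two preceding results, so the argument is short. First I would observe that the hypothesis $(\clubsuit)$ is strictly stronger than $(\blacklozenge)$: specializing $(k,l) = (i,j)$ in $q_{ij}q_{kl} \ne 1$ gives $q_{ij}^2 \ne 1$, and in particular $q_{ij} \ne 1$ for every $i < j$. Since also $\chc(\mathbb F) \ne 2$, Observation \ref{avd-1} is in force, so every linear automorphism is given by a monomial matrix, and Proposition \ref{simple-cse-q(ij)-not-1} yields the semidirect decomposition $\aut_{\mathrm L}(\mathcal O_{\mathfrak q}) \cong (\mathbb F^\ast)^n \ltimes \mathcal P$, where $\mathcal P \le S_n$ is the group of admissible permutations realized by the permutation factors of these monomial automorphisms.

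The remaining task is to show that $\mathcal P$ is trivial. I would argue by contradiction: suppose $\mathcal P$ contains a nontrivial permutation $\pi$, so that $\pi$ is admissible, i.e.\ $\pi \in \aut(\mathcal O_{\mathfrak q})$. Then Proposition \ref{prdt_pr_trvl} supplies indices $i < j$ and $k < l$ with $q_{ij}q_{kl} = 1$, directly contradicting $(\clubsuit)$. Hence no nontrivial admissible permutation exists, $\mathcal P$ is the trivial group, and the semidirect product collapses to $\aut_{\mathrm L}(\mathcal O_{\mathfrak q}) = (\mathbb F^\ast)^n$, as claimed.

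There is no serious obstacle here beyond correctly aligning the hypotheses. The only points requiring care are the verification that $(\clubsuit)$ subsumes $(\blacklozenge)$ (so that the monomial structure of Observation \ref{avd-1} and the decomposition of Proposition \ref{simple-cse-q(ij)-not-1} apply), and the identification of the permutation subgroup $\mathcal P$ occurring there with the set of admissible permutations to which Proposition \ref{prdt_pr_trvl} is applicable. Once these identifications are made, the contradiction with $(\clubsuit)$ is immediate and the proof is complete.
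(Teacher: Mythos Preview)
Your proof is correct and follows essentially the same approach as the paper's own proof: derive $q_{ij}\ne 1$ from $(\clubsuit)$ by setting $(k,l)=(i,j)$, invoke Proposition~\ref{simple-cse-q(ij)-not-1} to obtain the semidirect decomposition, and then kill the permutation factor via Proposition~\ref{prdt_pr_trvl}. Your write-up is in fact more explicit than the paper's about why the permutation part $\mathcal P$ consists of admissible permutations, which is the one point that deserves a word of justification.
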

\begin{proof}
The condition \eqref{prdts-ne-1-stng} means that $q_{ij} \ne \pm 1$ whenever $i < j$. Thus Proposition \ref{simple-cse-q(ij)-not-1} applies and we conclude by Proposition \ref{prdt_pr_trvl}. 
\end{proof}

In the next section we will show  that the same conclusion remains valid under a different hypothesis which allows $1$ to be included as an (off-diagonal) entry of $\mathfrak q$.

\section{Automorphisms of quantum affine spaces--Main results}

\begin{prop}
Suppose that $\chc(\mathbb F) \ne 2$ and $n \ge 3$. Let $\mathfrak q = (q_{ij})$ be a multiplicatively antisymmetric matrix whose entries satisfy
\begin{align*}
 &\mbox{(i)}\  q_{ij} \ne q_{kl} \ \ \ \ \ \ \ \ \forall i < j , \ \ \forall k <  l, \\
 &\mbox{(ii)} \ q_{ij}q_{kl} \ne 1 \ \ \ \ \ \ \ \ \ \forall i < j , \ \ \forall k <  l, \ (i,j) \ne (k,l).
\end{align*}
 The group of linear automorphisms of the quantum affine space $\mathcal O_{\mathfrak q}(\mathbb F^n)$ coincides with the torus ${(\mathbb F^\ast)}^n$.
\end{prop}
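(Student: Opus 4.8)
The plan is to show that any invertible matrix $(\alpha_{ij})$ satisfying the defining equations \eqref{AC-cond-lin-aut-re} is necessarily diagonal; since every invertible diagonal matrix realizes an element of the torus $(\mathbb F^\ast)^n$ and conversely, this yields $\aut_{\mathrm L}(\mathcal O_{\mathfrak q}) = (\mathbb F^\ast)^n$. I would carry this out in two stages: first force $(\alpha_{ij})$ to be monomial (one nonzero entry per row and per column), so that the automorphism factors as a torus element followed by an admissible permutation $\pi$; then show that conditions (i) and (ii) force $\pi = \mathrm{id}$.

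For the monomial reduction I would first exploit the diagonal specialization \eqref{setkeql}. Since $\chc(\mathbb F) \ne 2$, it gives $\alpha_{ik}\alpha_{jk}(1 - q_{ij}) = 0$ for all $i<j$ and all $k$, so within a fixed column $k$ two rows $i,j$ can both carry nonzero entries only when $q_{ij} = 1$. By condition (i) the $q_{ij}$ ($i<j$) are pairwise distinct, hence at most one pair, say $(i_0,j_0)$, satisfies $q_{i_0 j_0} = 1$. Consequently each column is supported on a single row, except that a column may be supported on the pair $\{i_0,j_0\}$; as all columns supported inside $\{i_0,j_0\}$ lie in the two-dimensional span of $e_{i_0}, e_{j_0}$, invertibility forces at most two such columns, which must then cover the (nonzero) rows $i_0$ and $j_0$. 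If no pair with $q_{ij}=1$ exists this is exactly Observation~\ref{avd-1}, and $(\alpha_{ij})$ is monomial outright.

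When the exceptional pair $(i_0,j_0)$ is present, the crux is to eliminate the resulting $2\times 2$ block. Taking $i=i_0,\,j=j_0$ in \eqref{AC-cond-lin-aut-re} and using $q_{i_0 j_0}=1$ collapses the equation to $(q_{kl}-1)(\alpha_{i_0 k}\alpha_{j_0 l} - \alpha_{i_0 l}\alpha_{j_0 k}) = 0$; since condition (i) gives $q_{kl}\ne 1$ for every $(k,l)\ne(i_0,j_0)$, all $2\times2$ minors of the two-row block on rows $i_0,j_0$ vanish except possibly the one on columns $(i_0,j_0)$. As rows $i_0,j_0$ are linearly independent, this pins the two admissible columns to be exactly $i_0$ and $j_0$, so $(\alpha_{ij})$ is monomial away from a $2\times 2$ block at rows and columns $\{i_0,j_0\}$. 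To make that block diagonal I would feed \eqref{AC-cond-lin-aut-re} a block row together with an outside (already monomial) row $m$ whose unique nonzero entry sits in column $p\notin\{i_0,j_0\}$; the right-hand side then vanishes and one is left with $\alpha_{i_0 j_0}\alpha_{mp}(q_{j_0 p}-q_{i_0 m})=0$. Because the unordered pairs $\{j_0,p\}$ and $\{i_0,m\}$ are distinct, condition (i) (or condition (ii), after inverting when an index ordering is reversed) gives $q_{j_0 p}\ne q_{i_0 m}$, whence $\alpha_{i_0 j_0}=0$; the symmetric choice kills $\alpha_{j_0 i_0}$. Thus $(\alpha_{ij})$ is monomial, i.e. a torus element composed with an admissible permutation $\pi$.

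Finally I would show $\pi=\mathrm{id}$. If $\pi$ contained a cycle of length $\ge 3$, Proposition~\ref{prdt_pr_trvl} would produce a relation $q_{cd}q_{ef}=1$ with $c<d$, $e<f$ and $\{c,d\}\ne\{e,f\}$ (the edges of a cycle of length $\ge3$ being distinct), contradicting (ii). A transposition $(ab)$ with $a<b$ is admissible only if $q_{aj}=q_{bj}$ for every $j\notin\{a,b\}$ by the criterion \eqref{OP-criterion2}; since $n\ge 3$ such a $j$ exists, and putting its indices in increasing order turns this identity into either an equality of two distinct canonical multiparameters (contradicting (i)) or a relation $q\cdot q'=1$ between distinct pairs (contradicting (ii)). Hence no nontrivial cycle survives and $\pi=\mathrm{id}$, giving $(\alpha_{ij})$ diagonal and $\aut_{\mathrm L}(\mathcal O_{\mathfrak q})=(\mathbb F^\ast)^n$. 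I expect the main obstacle to be precisely the exceptional pair $(i_0,j_0)$: Observation~\ref{avd-1} no longer applies, and the $2\times2$ block it creates must be diagonalized by hand, which is where conditions (i) and (ii) are used most delicately.
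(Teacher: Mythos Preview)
Your argument is correct and reaches the same conclusion, but the route is genuinely different from the paper's. The paper never passes through the intermediate statement ``$A$ is monomial'': instead it picks a column $k$ with a single nonzero entry $\alpha_{uk}$, pairs it with another column $l$, and uses \eqref{AC-cond-lin-aut-re} directly to force $u=k$ (and $v=l$), thereby showing column by column that the nonzero entries sit on the diagonal. To make this work the paper must also keep track of a second exceptional pair $(i'',j'')$ with $q_{i''j''}=-1$ (allowed by~(ii) when $(i,j)=(k,l)$), because in its Step~1/Step~2 analysis the factor $q_{kl}q_{vu}-1$ could vanish there; the choice of $l$ is accordingly constrained to avoid $(i'',j'')$.

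Your two-stage decomposition sidesteps that bookkeeping: once rows $i_0,j_0$ are shown (via the vanishing of all but one $2\times 2$ minor) to be supported on columns $i_0,j_0$, the rest of the matrix is automatically monomial, and the single off-block equation you write down kills $\alpha_{i_0 j_0}$ and $\alpha_{j_0 i_0}$ without ever meeting the $-1$ obstruction (condition~(i) guarantees that at most one upper-triangular entry equals $-1$, so the product you need to be $\ne 1$ is safe). The second stage then reuses Proposition~\ref{prdt_pr_trvl} and Lemma~\ref{fund_lemma} rather than repeating their content inside the proof. The trade-off is that your argument leans on the Section~3 machinery and requires a short case check on index orderings (which you acknowledge), whereas the paper's proof is self-contained but carries the extra $(i'',j'')$ case analysis throughout. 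Both are of comparable length; yours is arguably cleaner to read because the two conceptual steps---shape, then permutation---are separated.
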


\begin{proof}
Note that the hypothesis means that there is at most one entry $q_{i'j'}$ with $i' < j'$ that is equal to $1$. Similarly, there is at most one entry $q_{i''j''}$ with $i'' < j''$ that is equal to $-1$.
Suppose that \[ A = (\alpha_{ij}) \in \gl(n, \mathbb F)\] 
induces a linear automorphism of the given quantum affine space. It suffices to show that $A$ is a diagonal matrix. 
Using \eqref{setkeql} it is easily seen that in any column of $A$ at most two entries can be nonzero and in the case there are two nonzero entries, these must be in the $i'$-th and 
$j'$-th rows. 

We claim that there can be at most two columns in $A$ that have two nonzero entries. Indeed suppose that $2 + s$ columns have two nonzero entries necessarily in the $i'$-th and $j'$-th rows. As just noted the remaining $n - 2 - s$ columns each has exactly one non-zero entry.  Clearly, if $s > 0$ these $n - 2 - s$ nonzero entries in $A$ cannot fulfill the requirement of a non-zero entry in each of the $n - 2$ rows other than the $i'$-th and $j'$-th rows. This shows that $s = 0$. 

Next we note that at least one of the entries $\alpha_{i'i'}$ and  $\alpha_{j'i'}$ is non-zero. To see this we pick $m < p$ in the range $1, \cdots,n$. 
By \eqref{AC-cond-lin-aut-re} we have noting that $q_{i'j'} = 1$  
\begin{align}
\label{apln_fundl_rln1}
\alpha_{i'm}\alpha_{j'p}(q_{mp} - 1) &= \alpha_{i'p}\alpha_{j'm}(q_{mp} - 1). 
\end{align} 
 If $(m,p) \ne (i',j')$ then as noted above $q_{mp} \ne 1$ and \eqref{apln_fundl_rln1} reveals that  
the minor of $A$ corresponding to the $2 \times 2$ submatrix formed by the $i'$-th and $j'$-th rows and the $m$-th and $p$-th columns is equal to zero. If $\alpha_{i'i'} = \alpha_{j'i'} = 0$ then the minor corresponding to the $2 \times 2$ submatrix $K$ defined by the $i'$-th and $j'$-th rows and the $i'$-th and $j'$-th columns is also equal to zero.
This would mean that a row of the exterior square $\wedge^2 A$ of $A$ is the zero row contradicting the assumption $A$ is non-singular. 
By the same token at least one of the entries $\alpha_{i'j'}$ and $\alpha_{j'j'}$ in column $j$ is nonzero. 
Moreover, the non-zero entries in the two columns, namely, $i'$ and $j'$ cannot be in only one of the rows $i'$ or $j'$ as in this case the determinant of $K$ will be zero.

We now claim that if a column of $A$ has two non-zero entries then it must be the $i'$-th or the $j'$-th column. Indeed let $h$ be a column of $A$ having two non-zero entries. As noted above these non-zero entries of $h$ must be rows $i'$ and $j'$. Thus the three columns $i'$, $j'$ and $h$ have non-zero entries only in rows $i'$ and $j'$. Consequently there can be at most $n -3$ nonzero entries in columns other than $i',j'$ and $h$ that are contained in the $n - 2$ rows other than $i'$ and $j'$. But this means there is a row with no non-zero entry contradicting the assumption that $A$ is non-singular.   


We now consider a column of $A$, say the $k$-th,
that has only one non-zero entry, say in the $u$-th row.  
This is possible in view of the foregoing noting that $n \ge 3$ buy the theorem hypothesis.   

Let $\alpha_{uk}$ be the unique nonzero entry in the $k$-th column.  In view of the foregoing observations it is easily seen that $\alpha_{uk}$ is also the unique nonzero entry in the $u$-th row. 
We note as $n \ge 3$ (by the hypothesis) it is always possible to choose an $l$ ($1 \le l \ne k \le n$) such that either 
\begin{align}
& (i)\  k < l  \ \mathrm{and} \ (k, l) \ne (i'',j''); \  \mathrm{or},  \nonumber\\ 
& (ii) \ l < k  \ \mathrm{and} \ (l, k) \ne (i'',j''). \numberthis
\label{col:k-col:l}
\end{align}
note that we do not assume that the column $l$ has only one non-zero entry. \\
\textbf{Step 1:} In case (i) we suppose that the $l$-th column has a nonzero entry, namely, $\alpha_{vl}$ in the $v$-th row. We claim that $v > u$. Indeed if $v < u$ then \eqref{AC-cond-lin-aut-re} yields  
\begin{equation}\label{forbidden-comb}
    \alpha_{vk}\alpha_{ul}(q_{kl} - q_{vu}) = \alpha_{uk}\alpha_{vl}(q_{kl}q_{vu} - 1).
\end{equation}
By the preceding paragraph the LHS of \eqref{forbidden-comb} vanishes (as $\alpha_{uk}$ is the only non-zero entry in column $k$) and (noting theorem hypothesis) the RHS is nonzero, unless,  
\begin{equation}\label{avoidance_comb} 
(k,l) = (v, u) = (i'',j''). 
\end{equation}
But by the choice of the pair $(k,l)$ \eqref{avoidance_comb} cannot hold. Our assertion, namely, $u < v$ now follows.   
We now apply \eqref{AC-cond-lin-aut-re} to $u < v$ and $k < l$ and thus obtain
\[ \alpha_{uk}\alpha_{vl}(q_{kl} - q_{uv}) = \alpha_{vk}\alpha_{ul}(q_{kl}q_{uv} - 1). \] 
Clearly in the last equation the RHS is equal to zero, but the LHS can vanish only if $q_{kl} = q_{uv}$ and by the theorem hypothesis this is possible only if $k = u$ and $l = v$. The $k$-th and $l$-th columns of $A$ must therefore coincide with the corresponding columns of a suitable diagonal matrix $D$.

\textbf{Step 2:} We now suppose that case (ii) holds in \eqref{col:k-col:l}.  As before let $\alpha_{vl} \ne 0$ for some 
$v$. We claim that in this case $v < u$. For assuming the contrary and applying \eqref{AC-cond-lin-aut-re} leads to a contradiction quite similarly to that resulting from \eqref{forbidden-comb} above. Arguing as in case (i) and applying \eqref{AC-cond-lin-aut-re} to $v < u$ and $l < k$ we obtain
\[ \alpha_{vl}\alpha_{uk}(q_{lk} - q_{vu}) = \alpha_{ul}\alpha_{vk}(q_{lk}q_{vu} - 1) \]
and the last equation means that $l = v$ and $k = u$.

\textbf{Conclusion of proof:} Suppose that $k \ne i'',j''$. Then Steps 1 and 2 above immediately yield the fact that the matrix $A$ is a diagonal. Otherwise if $k = i''$ (resp. $k = j''$) by the above reasoning we obtain that the $s$-th column of $A$ has the form $\alpha_s \mathbf e_s$ (where $\mathbf e_s$ stands for the $s$-th column of the identity matrix and $\alpha_s \in \mathbb F^\ast$) except possibly when  $s = j''$ (resp. $s = i''$). But we may now redefine $k$ to be any natural number in $\{1, \cdots, n\} \setminus \{i'',j''\}$ and apply one of the Steps 1 and 2 depending on whether $k < j''$ or $k > j''$ (resp. $k < i''$ or $k > i''$).   
 
\end{proof}

As before, let $\{\mathbf e_1, \cdots, \mathbf e_n\}$ denote the standard basis in $\Gamma : = \mathbb Z^n$. Then $\{\mathbf e_i \wedge \mathbf e_j \mid 1 \le   i < j    \le n \}$ is a basis in $\wedge^2 \Gamma$. As usual, for a permutation $\pi \in S_n$ let $P \in \gl(n, \mathbb Z)$ denote the corresponding permutation matrix. 
Clearly $S_n$ acts on the $\mathbb Z$-module $\wedge^2 \Gamma$ via 
\begin{equation}\label{actn-Sn}
\pi(e_i \wedge e_j) = \wedge^2 P (e_i \wedge e_j) = e_{\pi(i)} \wedge e_{\pi(j)} \ \  \ \forall \pi \in S_n.    
\end{equation}  
\emph{In this action the image of $\pi$ in $\End_{\mathbb Z}(\wedge^2 \Gamma)$ will be denoted as $\wedge^2 \pi$.} By restriction we get an action of $S_n$ on the subset \[ \bar B =  
\{ \epsilon e_i \wedge e_j \mid  i < j, \ \mathrm{and} \ \epsilon \in \{-1, 1\} \}. \] 
We note the following lemma that will be used in the proof of Theorem B.

\begin{lem} \label{key-lem}
Let $\pi \in S_n \ (n \ge 3)$ and 
$\mathrm{Fix}(\wedge^2 \pi)$ stand for the sub-module
of $\wedge^2 \Gamma$ left fixed by the permutation $\pi$ under the action \eqref{actn-Sn}.   
Then \[ \rk(\mathrm{Fix}(\wedge^2 \pi)) \le \binom{n - 1}{2}. \] with equality holding whenever $\pi$ a transposition. 

\end{lem}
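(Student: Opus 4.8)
The plan is to compute the rank of $\mathrm{Fix}(\wedge^2\pi)$ by passing to eigenvalues and exploiting the block structure of $\wedge^2$ on the eigenspace decomposition of the permutation matrix $P$ of $\pi$. Since $\wedge^2 P$ is an integer matrix, the fixed $\mathbb Z$-submodule $\mathrm{Fix}(\wedge^2\pi) = \ker(\wedge^2 P - I)\cap \wedge^2\Gamma$ is a saturated sublattice, so its rank equals the $\mathbb C$-dimension of $\ker(\wedge^2 P - I)$; hence it suffices to work over $\mathbb C$. Here $P$ is diagonalizable with eigenvalues roots of unity, and if $\pi$ has cycle type $(\ell_1,\dots,\ell_r)$ then the eigenvalue $1$ occurs with multiplicity exactly $r$, the number of cycles.

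First I would split $V := \mathbb C^n = V_1 \oplus V_0$ into $P$-invariant summands, where $V_1 = \ker(P-I)$ is the $1$-eigenspace (so $\dim V_1 = r$) and $V_0$ is the sum of all the remaining eigenspaces (so $\dim V_0 = n-r$ and $P|_{V_0}$ has no eigenvalue $1$). Using the standard decomposition $\wedge^2 V = \wedge^2 V_1 \oplus (V_1\otimes V_0)\oplus \wedge^2 V_0$ of $\langle\pi\rangle$-modules, I would compute the fixed space blockwise: $P$ acts trivially on $\wedge^2 V_1$, contributing $\binom{r}{2}$; it acts as $\mathrm{id}\otimes P|_{V_0}$ on $V_1\otimes V_0$, whose eigenvalues are exactly those of $P|_{V_0}$ and hence never equal $1$, contributing $0$; and on $\wedge^2 V_0$ it contributes $\dim\mathrm{Fix}(\wedge^2(P|_{V_0}))$. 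This yields the key identity
\[ \rk\big(\mathrm{Fix}(\wedge^2\pi)\big) = \binom{r}{2} + \dim\mathrm{Fix}\big(\wedge^2(P|_{V_0})\big). \]

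Next, bounding the second summand trivially by $\dim\wedge^2 V_0 = \binom{n-r}{2}$ gives $\rk(\mathrm{Fix}(\wedge^2\pi)) \le \binom{r}{2}+\binom{n-r}{2}$. Since $\pi$ is to be taken non-trivial (one must exclude the identity, which fixes all of $\wedge^2\Gamma$ and has rank $\binom{n}{2}$), at least one cycle has length $\ge 2$ and therefore $1 \le r \le n-1$. The function $f(r) = \binom{r}{2}+\binom{n-r}{2}$ is convex on $[1,n-1]$ with $f(1)=f(n-1)=\binom{n-1}{2}$, so $f(r)\le\binom{n-1}{2}$ throughout, which is the asserted inequality.

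Finally, for the equality statement I would take $\pi$ to be a transposition, so $r = n-1$ and $V_0$ is the one-dimensional $(-1)$-eigenspace. Then $\wedge^2 V_0 = 0$, forcing $\dim\mathrm{Fix}(\wedge^2(P|_{V_0})) = 0 = \binom{n-r}{2}$, and the key identity gives $\rk(\mathrm{Fix}(\wedge^2\pi)) = \binom{n-1}{2}$ exactly. The only genuinely delicate points are the block computation---in particular justifying that the middle summand $V_1\otimes V_0$ contributes nothing and that the $\mathbb Z$-rank coincides with the $\mathbb C$-dimension---after which the bound reduces to the elementary convexity of $f$; I therefore do not anticipate a serious obstacle, the content being concentrated in the eigenspace decomposition rather than in any intricate estimate.
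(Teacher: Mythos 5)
Your argument is correct, and it takes a genuinely different route from the paper. The paper works combinatorially: it identifies $\mathrm{Fix}(\wedge^2\pi)$ with the span of orbit sums of the cyclic group $\langle\pi\rangle$ acting on the signed basis $\{\pm\,\mathbf e_i\wedge\mathbf e_j\}$, counts orbits via Burnside's formula (citing an external source for the count), and then compares $\mathcal N_\pi$ with $\mathcal N_{(ij)}$ using floor-function estimates. You instead diagonalize the permutation matrix over $\mathbb C$, use the saturation of $\ker(\wedge^2P-I)\cap\wedge^2\Gamma$ to replace $\mathbb Z$-rank by $\mathbb C$-dimension, and exploit the decomposition $\wedge^2(V_1\oplus V_0)=\wedge^2V_1\oplus(V_1\otimes V_0)\oplus\wedge^2V_0$ to get the exact identity $\rk(\mathrm{Fix}(\wedge^2\pi))=\binom{r}{2}+\dim\mathrm{Fix}(\wedge^2(P|_{V_0}))$, after which the bound is the convexity of $\binom{r}{2}+\binom{n-r}{2}$ on $[1,n-1]$. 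All the individual steps check out: the multiplicity of the eigenvalue $1$ is the number of cycles $r$, the middle summand contributes nothing since $P|_{V_0}$ has no eigenvalue $1$, and a transposition has $r=n-1$ with $\wedge^2V_0=0$, giving equality. Your version is more self-contained and arguably sharper, since it produces a closed formula for the rank rather than only an upper bound; it also makes transparent why the identity permutation must be excluded ($r=n$ gives $\binom{n}{2}$), a hypothesis the paper's statement omits but its proof silently assumes, exactly as you do. One small remark: equality can also occur for non-transpositions (e.g.\ any $\pi$ with $r=n-1$ is already a transposition, but $r=1$ with $\dim\mathrm{Fix}(\wedge^2(P|_{V_0}))=\binom{n-1}{2}$ is excluded only by your trivial bound being non-strict), though the lemma only claims equality \emph{whenever} $\pi$ is a transposition, which you prove.
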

\begin{proof}
It is seen without difficulty that the submodule $\mathrm{Fix}(\wedge^2 \pi)$ is generated by the sums of elements in the orbits of the cyclic subgroup $C_\pi$ generated by $\pi$ acting on the set $\bar B$ as defined in the discussion preceding the theorem. 
Needless to say such ``orbit sums" need not be linearly independent (over $\mathbb Z$) or even be non-zero. For example the $C_{(ij)}$-orbit $\{\mathbf e_i \wedge \mathbf e_j, -\mathbf e_i \wedge \mathbf e_j\}$ has zero sum.  
Let  $\mathcal N_\pi$ the number of $C_{(ij)}$-orbits in this action on the set $\bar B$.
By an application of the well-known formula of Burnside it can be shown (\cite{Sil}) that the number $N_\pi$ is maximal when $\pi$ is a transposition $(ij)$ and in this case 
\[ \mathcal N_{(ij)} = (n - 2)(n - 3) + (2n - 3). \] 
It is also clear that the orbit $C_{(ij)}(\mathbf e_{i'} \wedge \mathbf e_{j'})$ has a nonzero sum whenever $(i,j) \ne (i',j')$ and that this sum is the negative of the sum of the orbit $C_{(ij)}(-\mathbf e_{i'} \wedge \mathbf e_{j'})$.  
We thus obtain \[ \rk \bigl (\mathrm{Fix}(\wedge^2 (ij)) \bigr ) =  \frac{\mathcal N_{(ij)} - 1}{2} = \binom{n - 1}{2}\] $\mathbb Z$-independent orbit-sums that constitute a $\mathbb Z$-basis for $\mathrm{Fix}(\wedge^2 \pi)$.
In general the group $C_\pi$ will have some orbits whose sum is zero. Clearly these orbits are precisely the orbits $C_\pi(\mathbf e_i \wedge \mathbf e_j)$ such that $C_\pi( \pm \mathbf e_i \wedge \mathbf e_j) \ni  \mp \mathbf e_i \wedge \mathbf e_j$ and the orbits $C_{\pi}$ with non-zero sum are precisely those for which \[ C_{\pi}(\mathbf e_i \wedge \mathbf e_j) \cap C_{\pi}(- \mathbf e_i \wedge \mathbf e_j) = \emptyset. \] 
It is immediate from this that the number of independent orbit-sums for the permutation $\pi$ are bounded above by $\floor*{\frac{\mathcal N_\pi}{2}}$. 

Evidently for any non-identity permutation $\pi \in S_n$ the number of fixed points $\mathrm{Fix}(\pi)$ is bounded above by $n - 2$ and this bound in attained only by a transposition $(ij)$. Using this fact and the Burnside formula an upper bound for $\mathcal N_\pi$ was obtained in \cite{Sil} as follows  
\begin{align*} \mathcal N_\pi &=  \frac{1}{\abs {C_\pi}}\Biggl ( n(n - 1) + \sum_{\phi \in C_\pi , 
\phi \ne 1}2\binom{\mathrm{Fix}(\phi)}{2} \Biggr ) \le \frac{ n(n - 1)}{\abs{C_\pi}} +   \frac{(\abs{C_\pi} - 1)}{\abs{C_\pi}}2\binom{n - 2}{2} \\ &= 
(n - 2)(n -3) + \frac{4n -6}{\abs{C_\pi}}. 
\end{align*} 
This  yields \[ \mathcal N_{(ij)} - \mathcal N(\pi) \ge (4n - 6)\Biggl (\frac{1}{2} - \frac{1}{\abs{C_\pi}} \Biggr )  
 \] 
and thus for $n \ge 2$
\begin{align*} \rk(\mathrm{Fix}\bigl (\wedge^2 (ij)) \bigr ) - \rk(\mathrm{Fix}\bigl (\wedge^2 \pi) \bigr ) &= \frac{\mathcal N_{(ij)} - 1}{2} - \floor*{\frac{\mathcal N_\pi}{2}} \\ &\ge -\frac{1}{2} + \frac{\mathcal N_{(ij)} - \mathcal N_\pi}{2} \ge -\frac{1}{2} + (2n - 3)\Biggl (\frac{1}{2} - \frac{1}{\abs{C_\pi}}\Biggr )   \ge -\frac{1}{2}.  \end{align*}
As $\frac{\mathcal N_{(ij)} - 1}{2} , \floor*{\frac{\mathcal N_\pi}{2}} \in \mathbb Z$ it clearly follows that \[ \rk(\mathrm{Fix}\bigl (\wedge^2 (ij)) \bigr ) \ge \rk(\mathrm{Fix}\bigl (\wedge^2 \pi) \bigr ). \]
\end{proof}
\begin{thm_B}
 A quantum affine space  $\mathcal O_{\mathfrak q}(\mathbb F^n)$ such that the subgroup $\Lambda$ has rank no smaller than $\binom{n - 1}{2} + 1$ satisfies 
 \[ \aut(\mathcal O_{\mathfrak q}(\mathbb F^n)) = (\mathbb F^\ast)^n. \]  

\end{thm_B}

\begin{proof}
As usual we write 
$\mathcal O_{\mathfrak q} = \mathcal O_{\mathfrak q}(\mathbb F^n)$.
The assumption concerning the rank of the group $\Lambda$ means that the corresponding quantum torus $\widehat {\mathcal O}_{\mathfrak q}$ (arising from localization) of the given quantum affine space $\mathcal O_{\mathfrak q}(\mathbb F^n)$ has center $\mathcal Z$ no bigger than the ground field $\mathbb F$. Indeed viewing the latter algebra as a crossed product $\mathbb F \ast \Gamma$ ($\Gamma = \mathbb Z^n$) as in \cite{OP1995} it is easily seen that for any subgroup $\Gamma'$ of $\Gamma$ of finite index the corresponding $\lambda$-group $\Lambda'$ of the sub-quantum torus $\mathbb F \ast \Gamma'$ has the same rank as that of the group $\Lambda$. 

It is well known (e.g., \cite{OP1995}) that the center of a quantum torus is a generated by monomials.  
Given a central monomial $\mathbf  z: = \mathbf X^{\mathbf m}$ in the algebra $\widehat {\mathcal O}_{\mathfrak q}$ we can clearly extend the set $\{ \mathbf z \}$ to a set of $n$ monomials which together with their inverses generate a subalgebra of the form $\mathbb F \ast \Gamma'$ as described in the previous paragraph.
But the central monomial $\mathbf z$ evidently reduces $\rk(\Lambda')$ by $n - 1$ implying $\rk(\Lambda) \le \binom{n - 1}{2}$ and thus contradicting the theorem hypothesis. We also note that by the theorem of Section 1.3 of \cite{MP}  the algebra $\mathcal O_{\mathfrak q}$ is simple. 

Let $\pi$ be a permutation of the generators $X_1, \cdots X_n$ of the generators of the algebra $\widehat {\mathcal O}_{\mathfrak q}$. 
To show the assertion in the theorem it suffices in view of \cite[Propositon 3.2]{OP1995} to show that 
if $\pi \in \aut(\mathcal O_{\mathfrak q})$ then $\pi = \mathrm{id}$. To this end we note that by \cite[Proposition 1.5]{OP1995} the permutation $\pi$ extends to an automorphism of the quantum torus $\widehat {\mathcal O}_{\mathfrak q}$. Moreover this extension induces on $\Gamma$ the automorphism which is given by the permutation matrix $P$ corresponding to $\pi$. By Remark \ref{key-rmk} \[ \mathrm{Fix}\wedge^2 \pi \ge \binom{n - 1}{2} + 1 \]    
contradicting Lemma \ref{key-lem}     
\end{proof}

\subsection{Examples} 

\begin{example} In the situation of Example 2 of Section \ref{aut-qtor} we consider the quantum affine space defined by the same matrix $\mathfrak q$ of multiparameters. In this case the $\lambda$-group has rank $2$ and so Theorem B applies. In particular this quantum space is a simple ring and whose automorphisms (permutations of generators) lift to automorphisms of the corresponding quantum torus $\widehat{\mathcal O}_{\mathfrak q}$ . As seen in the same example referred to above these automorphisms induce on $\Gamma$ automorphisms having the form $\begin{pmatrix}
\epsilon & 0 & a \\
0 & \epsilon & b \\
0 & 0 & \epsilon
\end{pmatrix}$ 
But the only permutation matrix which has this form is the identity. In other words the assertion of Theorem B holds true in this case.   
\end{example}

\begin{example}\label{5_circ}
Consider the multiplicatively antisymmetric matrices $\mathfrak q \in M_5(\mathbb F)$ defined by the $5 \times 5$ grid of Figure \ref{fig:M1}
\begin{figure}[htbp]
\centering
\begin{tikzpicture}[every node/.style={minimum size=.5cm-\pgflinewidth, outer sep=0pt}]
    \draw[step=0.5cm,color=black] (0,0) grid (2.5,2.5);
    \node[] at (-0.75,+0.75) {};
    \node[pattern=custom north west lines,hatchspread=3pt,hatchcolor=gray] at (+0.75,+2.25) {};
    \node[pattern=custom north west lines,hatchspread=3pt,hatchcolor=gray] at (+1.25,+1.75) {};
    \node[pattern=custom north west lines,hatchspread=3pt,hatchcolor=gray] at (+1.75,+1.25) {};
    \node[pattern=custom north west lines,hatchspread=3pt,hatchcolor=gray] at (+2.25,+0.75) {};
    \node[pattern=custom north west lines,hatchspread=3pt,hatchcolor=gray] at (+0.25,+0.25) {};
    \node[pattern= north east lines,hatchspread=4pt,hatchcolor=gray] at (+0.25,+1.75) {};
    \node[pattern=north east lines,hatchspread=3pt,hatchcolor=gray] at (+0.75,+1.25) {};
    \node[pattern=north east lines,hatchspread=3pt,hatchcolor=gray] at (+1.25,+0.75) {};
    \node[pattern=north east lines,hatchspread=3pt,hatchcolor=gray] at (+1.75,+0.25) {};
    \node[pattern= north east lines,hatchspread=3pt,hatchcolor=gray] at (+2.25,+2.25) {};
    \node[pattern=horizontal lines,hatchspread=3pt,hatchcolor=gray] at (+1.25,+2.25) {};
    \node[pattern=horizontal lines,hatchspread=3pt,hatchcolor=gray] at (+1.75,+1.75) {};
    \node[pattern=horizontal lines,hatchspread=3pt,hatchcolor=gray] at (+2.25,+1.25) {};
    \node[pattern=horizontal lines,hatchspread=3pt,hatchcolor=gray] at (+0.25,+0.75) {};
    \node[pattern=horizontal lines,hatchspread=3pt,hatchcolor=gray] at (+0.75,+0.25) {};     
    \node[pattern=horizontal lines,hatchspread=3pt,hatchcolor=gray] at (+1.25,+2.25) {};
    \node[pattern=horizontal lines,hatchspread=3pt,hatchcolor=gray] at (+1.75,+1.75) {};
    \node[pattern=horizontal lines,hatchspread=3pt,hatchcolor=gray] at (+2.25,+1.25) {};
    \node[pattern=horizontal lines,hatchspread=3pt,hatchcolor=gray] at (+0.25,+0.75) {};
    \node[pattern=horizontal lines,hatchspread=3pt,hatchcolor=gray] at (+0.75,+0.25) {};
    \node[pattern=vertical lines,hatchspread=3pt,hatchcolor=gray] at (+1.75,+2.25) {};
    \node[pattern=vertical lines,hatchspread=3pt,hatchcolor=gray] at (+2.25,+1.75) {};
    \node[pattern=horizontal lines,hatchspread=3pt,hatchcolor=gray] at (+2.25,+1.25) {};
    \node[pattern=vertical lines,hatchspread=3pt,hatchcolor=gray] at (+0.25,+1.25) {};
    \node[pattern=vertical lines,hatchspread=3pt,hatchcolor=gray] at (+1.25,+0.25) {};
    \node[pattern=vertical lines,hatchspread=3pt,hatchcolor=gray]  at (+0.75,+0.75) {};
\end{tikzpicture}
\caption{Multiplicatively antisymmetric matrix $\mathfrak q$ with circulant symmetry} \label{fig:M1}
\end{figure}
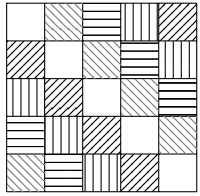
where the boxes with the same pattern indicate that the corresponding entries of $\mathfrak q$ are equal. Furthermore, the (equal) entries in the boxes with northeast hatching are inverses of the (equal) entries in the boxes with northwest hatching. 
Similarly for the boxes with vertical and horizontal hatching. The unhatched boxes correspond to $1 \in \mathbb F$. 

Thus the matrices arising in this way are multiplicatively antisymmetric as well as circulant. Evidently, the rank of the $\lambda$-group in this case is  at most $2$.  

Using \eqref{OP-criterion2} it is easily seen that the the 5-cycle $(12345)$ is an automorphism of $\mathcal O_{\mathfrak q}$ which clearly lifts to an automorphism of the corresponding quantum torus $\widehat{\mathcal O}_{\mathfrak q}$ (and thus induces an automorphism of $\mathbb Z^5$). As $\rk (\mathrm{Fix}(\wedge^2 (12345)) = 2$ this is agreement with Remark \ref{key-rmk}.

\end{example}

\newpage



\end{document}